\newtheorem{theorem}{Theorem}
\newtheorem*{remark}{Remark}
\newtheorem{lemma}{Lemma}
\newtheorem{proposition}{Proposition}
\newtheorem{definition}{Definition}
\newtheorem*{notation}{Notation}
\newtheorem*{notations}{Notations}
\def\R{\mathbb{R}}
\def\N{\mathbb{N}}
\begin{document}

\author{Vincent Renault, Michèle Thieullen\thanks{Sorbonne Universit\'es, UPMC Univ Paris 06, CNRS UMR 7599, Laboratoire de Probabilit\'es et Mod\`eles Al\'eatoires, F-75005, Paris, France. email: \href{mailto:vincent.renault@upmc.fr}{vincent.renault@upmc.fr}, \href{mailto:michele.thieullen@upmc.fr}{michele.thieullen@upmc.fr}}, Emmanuel Trélat\thanks{Sorbonne Universit\'es, UPMC Univ Paris 06, CNRS UMR 7598, Laboratoire Jacques-Louis Lions, Institut Universitaire de France, F-75005, Paris, France. email: \href{mailto:emmanuel.trelat@upmc.fr}{emmanuel.trelat@upmc.fr}}}

\title{Minimal time spiking in various ChR2-controlled neuron models}

\date{}


\maketitle


\begin{abstract}
We use conductance based neuron models and the mathematical modeling of Optogenetics to define controlled neuron models and we address the minimal time control of these affine systems for the first spike from equilibrium. We apply tools of geometric optimal control theory to study singular extremals and we implement a direct method to compute optimal controls. When the system is too large to theoretically investigate the existence of singular optimal controls, we observe numerically the optimal bang-bang controls. 

\smallskip
\noindent \textbf{Keywords.} Conductance-based neuron models, optimal control, minimal time affine control, singular extremals, Optogenetics.

\smallskip
\noindent \textbf{AMS Classification.} 49J15. 93C15. 92C20.

\end{abstract}

\section*{Introduction}

In this paper we investigate, theoretically and numerically, the minimal time control, via Optogenetics, of some widely used finite-dimensional deterministic neuron models such as the Hodgkin-Huxley model (\cite{HH}), the Morris-Lecar model (\cite{MorrisLecar}) and the FitzHugh-Nagumo model (\cite{FitzHugh}). Control of neuron models has been addressed in the literature in different ways. One popular way to investigate this problem is to look at phase reductions of non-linear evolution systems, consisting in reducing the system of equations to a single first-order differential equation, with for essential goal numerical computations of the dynamic programming formulation of the problem (\cite{PhaseReduction}, \cite{Nabi}). Integrate-and-fire models, which are also a simplification of nonlinear sytems to single first-order linear differential equations, receiving stochastic inputs, have been studied in \cite{Feng} in order to minimize the variance of the membrane potential, arguably linked to the variance of the final time, while reaching a given membrane potential threshold in fixed time. Theses simplifications allow the authors to obtain a nice analytic expression for the optimal control. A stochastic integrate-and-fire models have also been used in \cite{Ditlevsen} to find an optimal electrical stimulation to spike in a desired time, a problem close to ours, with numerical computation purposes.

All these studies were exclusively based on control via electrical stimulation. Optogenetics allows a control of excitable cells of a different nature. This recent and thriving technique is based on light stimulation (\cite{methodOfTheYear},\cite{OptoFuture},\cite{Opto10Years}). It has for cornerstone the genetical modification of excitable cells for them to express new ion channels whose opening and closing are triggered by the absorption of photons. In particular, it is able to target specific populations of neurons. Indeed, by designing viruses that will aim at these populations only, the light stimulation will have no effect on the other populations that do not express the new ion channels. This makes Optogenetics a noninvasive technique, in contrast to electrical stimulation that reaches a whole volume of tissue, regardless of the types of neurons that populate this volume. Furthermore, optical devices such as optic fibers and lasers allow to reach deeply embedded populations of neurons. It then provides Optogenetics with a tremendous advantage over electrical stimulation in the exploration of neural tissues and neural functions. The risk of tissue damage is also decreased with this technique. The perspectives of applications in medicine are thus colossal with, among others, the promise to help understand and treat Alzheimer's disease (\cite{Alzheimer}), Parkinson's disease (\cite{Parkinson}), epilepsy (\cite{Epilepsy}), vision loss (\cite{Blindness}), narcolepsy (\cite{Narcolepsy}) and even depression (\cite{Depression}).  

Our work is based on one of these light-gated ion channels called \textit{Channelrhodopsin} (ChR2). It is a depolarizing non-selective cation channel that opens upon a stimulation with blue light. One of the neural events that contains a lot of information is the latency time between two consecutive \textit{action potentials} or \textit{spikes} (a large depolarization of the membrane potential when it goes beyond some threshold). Here we want to specifically address the time optimal control of the first spike in various neuron models, for two different mathematical models of ChR2 introduced in \cite{Nikolic}. Indeed, the mathematical formulation of this problem is really close to the one of the optimal control of the latency time between two spikes. In particular, the investigation of singular trajectories is the same. To the best of our knowledge, this optimal control problem has never been studied before, neither in terms of electrical stimulation, nor in terms of light stimulation. 

In Section \ref{SectionPreliminaries} we set the mathematical framework of conductance-based neuron models and we recall some results of minimal time control problems for affine control systems, and the role of singular controls. We then present in Section \ref{resultsSection} the mathematical model of ChR2 and how the resulting models can be incorportated in conductance-based models. We apply our results to various neuron models in Section \ref{SectionApplication}. For the ChR2-3-state model, we prove that there are no singular optimal controls for two-dimensional models (FitzHugh-Nagumo, Morris-Lecar, reduced Hodgkin-Huxley models) and we give the expression of the \textit{bang-bang} optimal control. We illustrate these results with numerical computations of the optimal controls by means of a direct method. For the ChR2-
4-states model, we numerically observe optimal bang-bang controls. Along the review of the different models, we insist on how optimal control appears as a great tool to discuss and compare neuron models. In particular, it emphasizes a peculiar behavior of the Morris-Lecar model, compared to the other ones, and gives a new argument in favor of the reduced Hodgkin-Huxley model.

Although we focus in this paper on neuron models, our treatment of conductance-based model can be applied to any excitable cells such as cardiac cells for example (see \cite{Abilez} for a work on application of Optogenetics in cardiac cells for simulation purposes).

\section{Preliminaries}\label{SectionPreliminaries}
\subsection{Conductance based models}

Conductance based models form a popular class of simple biophysical models used to represent the activity of an excitable cell, such as a neuron or a cardiac cell. The principle is to give an equivalent circuit representation of the cell by assigning an electrical component to each meaningful biological component of the cell. Finite-dimensional conductance-based models represent the cell as a single isopotential electrical compartment. The lipid bilayer membrane of the cell is represented by a capacitance $C>0$. Across the membrane are disposed voltage-gated ion channels, represented by conductances $g_x>0$ whose values depend on the type $x$ of the channel. An ion channel is a protein that constitutes a gate across the membrane. It has the ability to let ions flow across the membrane or to prevent them from doing so. Ion channels are said selective in the sense that they act as a filter of certain types of ions. The main types of ion channels are potassium ($K^+$) channels, sodium ($Na^+$) channels and calcium ($Ca^{2+}$) channels. The ion flows are driven by electrochemical gradients represented by batteries whose voltages $E_x\in\R$ equal the membrane potential corresponding to the absence of ion flow of type $x$. For that matter, they are called equilibrium potentials. The sign of the difference between the membrane potential and $E_x$ gives the direction of the driving force. The channels are all called voltage-gated because their opening and closing depend on the potential difference across the membrane. This means that the conductances $g_x$ are variable conductances, depending on the membrane potential.

\medskip

The ion flow across the membrane generates an electrical current in the circuit, the possible movements of ions inside the cell being neglected. To each type $x$ of ion channels is associated a macroscopic ion current $I_x$. The total membrane current is the sum of the capacitive current and all of ionic currents considered. In all models we consider in this paper, the ionic currents include a leakage current that accounts for the passive flow of some other ions across the membrane. This current is associated to a fixed conductance $g_L$ and is always denoted by $I_L$. 
 
Every macroscopic ion current $I_x$ is the result of the ion flow through all the ion channels of type $x$. Since the number of ion channels in an excitable cell is very large, the macroscopic conductance $g_x$ is a function of the probability $n_x\in[0,1]$ that a channel of type $x$ opens. In fact, the channels of type $x$ are constituted by several subpopulations of gates that have different dynamics. For that matter, let $k_x\in\N^*$ be the number of subpopulations of the channels of type $x$ and write $(n_{x_1},\dots,n_{x_{k_x}})\in[0,1]^{k_x}$ the probabilities that each gate of the subpopulation opens, that is, $n_{x_i}$ represents the probability that a gate of type $x_i$ opens. The time evolution of these probabilities in each subpopulation depends on the membrane potential and is of first order. For $i\in\{1,\dots,k_x\}$, it is represented on Figure \ref{IonChannel} and the dynamical system governing $n_{x_i}$ is the following

\begin{equation}\label{IonChannelDynamics}
\dot{n}_{x_i}(t) = \alpha_{x_i}(V)(1-n_{x_i}) - \beta_{x_i}(V)n_{x_i},
\end{equation}
 
where $\alpha_{x_i}$ and $\beta_{x_i}$ are smooth functions of the membrane potential $V$.

\begin{figure}[!ht]
\begin{center}
\begin{tikzpicture}[->,>=stealth',shorten >=1pt,auto,node distance=3cm,
  thick,main node/.style={circle,draw,font=\sffamily\Large\bfseries}]

  \node[main node] (1) {C};
  \node[main node] (2) [right of=1] {O};

  \path[every node/.style={font=\sffamily\small}]
    (1) edge [bend left] node [above] {$\alpha_{x_i}(V)$} (2)
    (2) edge  [bend left] node [below]  {$\beta_{x_i}(V)$} (1);

\end{tikzpicture}
\end{center}
\caption{Ion channel of type ${x_i}$}
\label{IonChannel}
\end{figure}
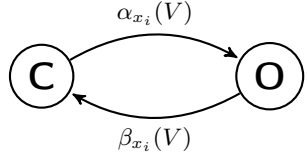

This dynamics can be easily interpreted as follows : when the potential across the membrane is equal to $V$, ion channels in the subpopulation of type $x_i$ open at rate $\alpha_{x_i}(V)$ and close at rate $\beta_{x_i}(V)$.

The macroscopic conductance $g_x$ is then given by

\[
g_x(n_x) = \bar{g}_x f_x(n_{x_1},\dots,n_{x_{k_x}}),
\] 

where $\bar{g}_x$ is the maximum conductance of the channel (i.e., the conductance when all the channels of type $x$ are open) and $f_x$ is a smooth function depending on the type of the channel.  

The macroscopic current $I_x$ of type $x$ is given by Ohm's law. Taking into account the equilibrium potential $E_x$, we get 

\begin{align*}
I_x &= g_x (V-E_x)\\
&= \bar{g}_x f_x(n_{x_1},\dots,n_{x_{k_x}})(V-E_x).
\end{align*}
  
In Figure \ref{EquivalentCircuit} below we give the example of a conductance-based model with two types of channels with conductances $g_1$ and $g_2$.

\setlength{\intextsep}{10pt plus 2pt minus 4pt}
\setlength{\floatsep}{10pt plus 2pt minus 4pt}
\setlength{\textfloatsep}{10pt plus 2pt minus 4pt}

\begin{figure}[!ht]
\shorthandoff{:!}
\begin{center}
\includegraphics[scale=0.8]{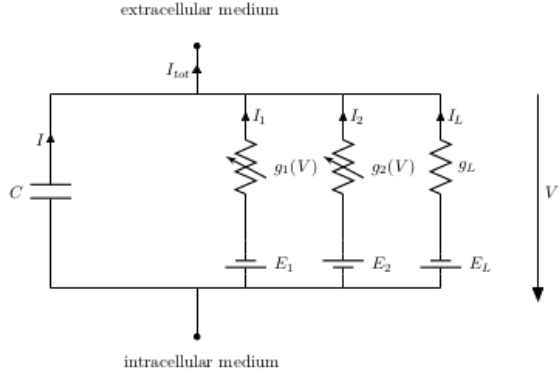}
\end{center}
\shorthandon{:!}
\caption{Equivalent circuit for a conductance-based model with two types of channels}
\setlength{\abovecaptionskip}{2pt}
\setlength{\belowcaptionskip}{15pt}
\label{EquivalentCircuit}
\end{figure}

\setlength{\intextsep}{5pt plus 2pt minus 4pt}
\setlength{\floatsep}{5pt plus 2pt minus 4pt}
\setlength{\textfloatsep}{5pt plus 2pt minus 4pt}

The total current $I_{tot}$ is given by 

\[
I_{tot} = I + I_1 + I_2 + I_L,
\]

where $I = C\frac{\mathrm{d}V}{\mathrm{d}t}$, $I_{1,2}=g_{1,2}(V)(V-E_{1,2})$ and $I_L=g_L(V-E_L)$.

\medskip

The first conductance-based model dates back to the seminal work of Hogkin and Huxley (\cite{HH}) on the squid giant axon. In voltage-clamp experiments (i.e., experiments in which the membrane potential was held fixed), they showed how the ionic currents could be interpreted in terms of changes in $Na^+$ and $K^+$ conductances. From the experimental data, they inferred the dependencies, on the membrane potential and the time, of these conductances. The resulting mathematical model became very popular because it was able to reproduce all key biophysical properties of an action potential. The $K^+$ channels are composed of a single population. Let us denote by $n$ the probability that a channel of type $K^+$ opens. The $K^+$ conductance is given by 

\[
g_K = \bar{g}_Kn^4.
\]

The population of $Na^+$ is composed of two subpopulations and we write $m$ and $h$ the corresponding probabilities that a certain type of gate opens. The $Na^+$ conductance is given by 

\[
g_{Na} = \bar{g}_{Na}m^3h.
\] 

\noindent The total membrane current $I_{tot}$ is then given by

\[
I_{tot} = C\frac{\mathrm{d}V}{\mathrm{d}t} + \bar{g}_Kn^4(V-E_K) + \bar{g}_{Na}m^3h(V-E_{Na}) + g_L(V-E_L),
\]
with $V$ the membrane potential. If an external current $I_{ext}$ is applied to the cell, we can write the dynamic system (HH) for the evolution of the membrane potential 

\begin{equation*}
(HH)\left\{
\begin{aligned}
C \dot{V}(t) &=  \bar{g}_Kn^4(t)(E_K - V(t)) +\bar{g}_{Na}m^3(t)h(t)(E_{Na}-V(t))\\
& \qquad \qquad + g_L(E_L-V(t)) + I_{ext}(t),\\
 \dot{n}(t) &= \alpha_n(V(t))(1-n(t)) - \beta_n(V(t))n(t),\\
\dot{m}(t) &= \alpha_m(V(t))(1-m(t)) - \beta_m(V(t))m(t),\\
 \dot{h}(t) &= \alpha_h(V(t))(1-h(t)) - \beta_h(V(t))h(t).
\end{aligned}
\right.
\end{equation*}

\noindent The expression of the functions $\alpha_x$ and $\beta_x$ and the numerical values of the constants can be found in Appendix \ref{AppendixHH}.

\medskip

To end this section, we give a formal mathematical definition of what we will refer to as a conductance-based model in the sequel.

\begin{definition}{\it Conductance based model.}\\
Let $n\in\N^*$. Let also $k\in\N^*$ and for all $i\in\{1,\dots,k\}$, let $j_i\in\N^*$ such that $\sum_{i=1}^k j_i = n-1$. We call $n$-dimensional conductance-based model the following dynamical system in $\R^n$

\begin{equation*}
\dot{x}_1(t) = \frac{1}{C}\Big(\sum_{i=1}^k \bar{g}_if_i(x_{j_1+\dots+j_{i-1}+1}(t),\dots,x_{j_1+\dots+j_{i-1}+j_i}(t))\big(E_i-x_1(t)\big)\Big),
\end{equation*}
with the convention that $j_1+\dots+j_{i-1}+1 = 2$ and $j_1+\dots+j_{i-1}+j_i = j_1$ for $i=1$, and for $i\in\{2,\dots,n\}$,

\begin{equation*}
\dot{x}_{i}(t) = \alpha_{i}(x_1(t))(1-x_i(t)) - \beta_{i}(x_1(t))x_i(t),
\end{equation*}

where $C>0$ and for all $i\in\{1,\dots,k\}$ and $l\in\{2,\dots,n\}$
\begin{itemize}
\item $\bar{g}_i>0$, $f_i : \R^{j_i}\rightarrow \R_+$ is a smooth function, 
\item $\alpha_l,\beta_l : \R\rightarrow \R$ are smooth functions such that for all $v\in\R, \alpha_l(v)+\beta_l(v)\neq 0$.
\end{itemize}

We finally require that the previous dynamical system exhibits an equilibrium point $x^{\infty}\in\R^n$, that we call resting state, defined by the following equations

\begin{equation*}
x_i^{\infty} = \frac{\alpha_i(x_1^{\infty})}{\alpha_i(x_1^{\infty})+\beta_i(x_1^{\infty})}, \quad \forall i\in\{2,\dots,n\},
\end{equation*}

and 

\begin{equation*}
0=\sum_{i=1}^k \bar{g}_if_i(x_{j_1+\dots+j_{i-1}+1}^{\infty},\dots,x_{j_1+\dots+j_{i-1}+j_i}^{\infty})\big(E_i-x_1^{\infty}\big)
\end{equation*}
	
\end{definition}

Conductance based models are uniquely defined on $\R_+$. The initial conditions $y\in\R^n$ that we consider are physiological conditions with $y_1$ in a physiological range for the membrane potential of the cell considered, basically $y_1\in[V_{min},V_{max}]$ with $-\infty<V_{min}<V_{max}<+\infty$, and $y_i\in[0,1]$ for all $i\in\{2,\dots,n\}$.

\subsection{The Pontryagin Maximum Principle for minimal time single-input affine problems}\label{PMPSection}

In this section we recall the necessary optimality conditions of the Pontryagin Maximum Principle applied to the specific affine problem that we investigate in the sequel.

\noindent Consider the minimum time problem for a smooth single-input affine system: 

\begin{equation}	\label{affineGeneralSystem}
\dot{x}(t) = F_0(x(t))+u(t)F_1(x(t)),\quad x(0) = x_{eq}\in\mathbb{R}^n,
\end{equation}
where $x(t)\in\mathbb{R}^n$ and $x_{eq}$ solution of $F_0(x)=0$ (i.e., an equilibrium point for the uncontrolled system). The control domain $U:=[0,u_{max}]$ is a segment of $\mathbb{R}_+$, with $u_{max}>0$. The state variable must satisfy the final condition $x(t_f)\in M_f$ where 

\begin{equation*}
M_f := \{x\in\mathbb{R}^n | x_1 = V_f\},
\end{equation*}
with $V_f > 0$ a given constant that will later correspond to the potential of a spike. The set of admissible controls, denoted $\mathcal{U}_{ad}$, is the subset of the measurable applications from $\R_+$ to $U$, denoted by $\mathcal{L}(\R_+,U)$, such that (\ref{affineGeneralSystem}) has a unique solution on $\R_+$.

We introduce the Hamiltonian $\mathcal{H} : \mathbb{R}^n\times \mathbb{R}^n\times \mathbb{R}_-\times U \rightarrow \mathbb{R}$ defined for $(x,p,p^0,u)\in \mathbb{R}^n\times \mathbb{R}^n\times \mathbb{R}_-\times U $ by 

\begin{equation}\label{generalHamiltonian}
\mathcal{H}(x,p,p^0,u) := \langle  p,F_0(x)  \rangle + u \langle p,F_1(x) \rangle + p^0,
\end{equation}
where $\langle \cdot,\cdot \rangle$ is the scalar product on $\R^n$, $p\in\R^n$ is the adjoint vector and $p^0\leq 0$ a non-positive number. The Pontryagin Maximum Principle (see \cite{PontryaginBook}, \cite{TrelatBook}) states that if the trajectory $t\rightarrow x^u(t)$, $t\in[0,t_f]$ associated with the admissible control $u \in \mathcal{U}_{ad}$ is optimal on $[0,t_f]$, then there exists $p:[0,t_f]\rightarrow \R^n$ absolutely continuous and $p^0\in\R_-$ such that $(p,p^0)$ is non zero and such that $p$ satisfy the following equations, almost everywhere in $[0,t_f]$:

\begin{align*}
\dot{x}^u(t) &= \frac{\partial \mathcal{H}}{\partial p}(x^u(t),p(t),p^0,u(t)),\\
\dot{p}(t) &= -\frac{\partial \mathcal{H}}{\partial x}(x^u(t),p(t),p^0,u(t)).
\end{align*} 
Moreover, the following maximum condition must be satisfied on $[0,t_f]$: 

\begin{equation}\label{generalMaxCond}
\mathcal{H}(x^u(t),p(t),p^0,u(t)) = \max_{v\in U} \mathcal{H}(x^u(t),p(t),p^0,v).
\end{equation}
In view of the initial and final conditions on the state variable, the transversality condition on $p(0)$ is empty and the one on $p(t_f)$ gives

\begin{align*}
p_1(t_f)  & = \lambda_1 \in \R, \\
p_i(t_f)  & = 0, \quad \forall i \in \{2,\dots,n\}.
\end{align*}

In our particular setting, the augmented system does not depend on the time variable. This implies that the right hand side of (\ref{generalMaxCond}) is constant on $[0,t_f]$. Now since there is no final cost and because the final time is not fixed, we also have 

\begin{equation*}
\max_{v\in U} \mathcal{H}(x^u(t_f),p(t_f),p^0,v) = 0.
\end{equation*}
The two latter remarks imply that for all $t\in[0,t_f]$

\begin{equation}
\mathcal{H}(x^u(t),p(t),p^0,u(t)) = 0 = \max_{v\in U} \mathcal{H}(x^u(t),p(t),p^0,v),
\end{equation}
which can be written, in view of (\ref{generalHamiltonian}): 

\begin{align}
& \langle  p(t),F_0(x^u(t))  \rangle + u(t) \langle p(t),F_1(x^u(t)) \rangle + p^0 = 0\label{NullAffineHamiltonian}\\
& \qquad = \langle  p(t),F_0(x^u(t))  \rangle + \max_{v\in U} v\langle p(t),F_1(x^u(t)) \rangle + p^0.\label{AffineMaxCond}
\end{align}

In the case of single-input affine systems, the maximum condition (\ref{AffineMaxCond}) gives the expression of the optimal control:

\begin{equation*}
u(t):=\left\{
\begin{aligned}
& u_{max}, \quad & \text{if } \langle p(t),F_1(x^u(t)) \rangle > 0,\\
& 0, \quad & \text{if } \langle p(t),F_1(x^u(t)) \rangle < 0,\\
& \text{undetermined}, \quad & \text{if } \langle p(t),F_1(x^u(t)) \rangle = 0. 
\end{aligned}
\right.
\end{equation*}

The function $\varphi(t) := \langle p(t),F_1(x^u(t)) \rangle$, whose sign gives the expression of the optimal control is called the switching function. If it does not vanish on any subinterval $I$ of $[0,t_f]$, the optimal control is a succession of constant controls called bang-bang control. The switching times between the two constant modes are given by the change of sign of the switching function $\varphi$. This conclusion fails if there exists a subinterval $I$ of $[0,t_f]$ along which the switching function vanishes. The control on $I$ is then called singular and this situation has to be further investigated. 

Finally, the non-triviality of $(p,p^0)$ reduces in fact to the one of $p$ because if $p(t)=0$ for a given $t\in[0,t_f]$ then $p^0 = 0$ because of (\ref{NullAffineHamiltonian}).

The investigation of the existence of singular trajectories will be done later for our different models but for now let us state that if there exists a subinterval $I$ on which the switching function vanishes, with $u$ the corresponding control, then from the Pontryagin Maximum Principle, $(x^u,p,u)$ is the solution, on $I$, of the following equations: 

\begin{equation*}
\dot{x}^u(t) = \frac{\partial \mathcal{H}}{\partial p}(x^u(t),p(t),p^0,u(t)), \quad \dot{p}(t) = -\frac{\partial \mathcal{H}}{\partial x}(x^u(t),p(t),p^0,u(t)), \quad \langle p(t),F_1(x^u(t)) \rangle =0.
\end{equation*}

\section{Control of conductance-based models via Optogenetics}\label{resultsSection}
In this section we consider a general conductance-based model in $\mathbb{R}^n$, with $n\in\mathbb{N}^*$, of the form 

\begin{equation}\label{generalCondBasedModel}
\dot{x}(t) = f_0(x(t)),	\quad t\in \mathbb{R}_+,\quad x(0) = x_0 \in \mathcal{D}\subset \mathbb{R}^n,
\end{equation}
with $f_0$ a smooth vector field in $\mathbb{R}^n$ and $\mathcal{D}$ physiological domain.

\medskip

Optogenetics is a recent and innovative technique which allows to induce or prevent electric shocks in living tissue, by means of light stimulation. Succesfully demonstrated in mammalian neurons in 2005 (\cite{millisecondTimescale}), the technique relies on the genetic modification of cells in order for them to express particular ionic channels, called rhodopsins, whose opening and closing are directly triggered by light stimulation. One of these rhodopsins comes from an unicellular flagellate algae, \textit{Chlamydomonas reinhardtii}, and has been baptized Channelrodhopsins-2 (ChR2). It is a cation channel that opens when illuminated with blue light.

Since the field is very young, the mathematical modeling of the phenomenon is quite scarce. Some models have been proposed, based on the study of the photocycles that the channel go through when it absorbs a photon (see \cite{Nikolic} for a 3-states model and \cite{Hegemann} for a 4-states model). In \cite{Nikolic}, the authors study two models for the ChR2 that are able to reproduce the photocurrents generated by the light stimulation of the channel. Those models are constituted by several states that can be either conductive (the channel is open) or non-conductive (the channel is closed). Transitions between those states are spontaneous, depend on the membrane potential or are triggered by the absorption of a photon. This kind of models has already been used to simulate photocurrents in cardiac cells. In \cite{Abilez}, the authors include ChR2 photocurrents into an infinite dimensional model and use finite differences and elements to simulate the system. The optimal control of such a system is not investigated in this paper.
Here we are interested in both 3-states and 4-states models of Nikolic and al. \cite{Nikolic}. The 3-states model has one open state $o$ and two closed states $c$ and $d$ while the 4-states model has two open states $o_1$ and $o_2$, and two closed states $c_1$ and $c_2$. Their transitions are represented on Figures \ref{ChR2_3States} and \ref{ChR2_4States}.

\begin{figure}[!ht]
\begin{center}
\begin{tikzpicture}[->,>=stealth',shorten >=1pt,auto,node distance=3cm,
  thick,main node/.style={circle,draw,font=\sffamily\Large\bfseries}]

  \node[main node] (1) {c};
  \node[main node] (2) [right of=1] {d};
  \node[main node] (3) [below left of=2] {o};

  \path[every node/.style={font=\sffamily\small}]
    (1) edge [bend right] node [left] {$u(t)$} (3)
    (2) edge  [bend right] node [above]  {$K_r$} (1)
    (3) edge [bend right] node [right] {$K_d$} (2);

\end{tikzpicture}
\end{center}
\caption{ChR2 three states model}
\label{ChR2_3States}
\end{figure}
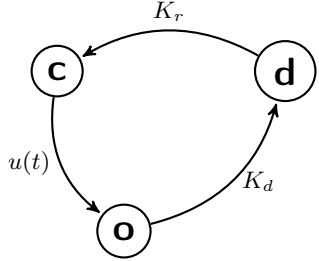

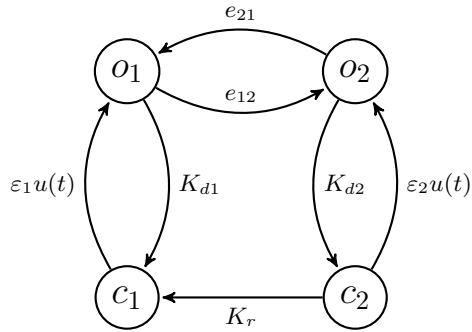
\begin{figure}[!ht]
\begin{center}
\begin{tikzpicture}[->,>=stealth',shorten >=1pt,auto,node distance=3cm,
  thick,main node/.style={circle,draw,font=\sffamily\Large\bfseries}]

  \node[main node] (1) {$o_1$};
  \node[main node] (2) [right of=1] {$o_2$};
  \node[main node] (3) [below of=2] {$c_2$};
  \node[main node] (4) [below of=1] {$c_1$};

  \path[every node/.style={font=\sffamily\small}]
    (1) edge [bend left] node [right] {$K_{d1}$} (4)
        edge [bend right] node [above] {$e_{12}$} (2)
    (2) edge [bend right] node [above] {$e_{21}$} (1)
        edge [bend right] node[right] {$K_{d2}$} (3)
    (3) edge [bend right] node [right] {$\varepsilon_2 u(t)$} (2)
        edge node {$K_r$} (4)
    (4)edge [bend left] node[left] {$\varepsilon_1 u(t)$} (1);
        
\end{tikzpicture}
\setlength{\abovecaptionskip}{10pt}
\caption{ChR2 four states model. }\label{ChR2_4States}
\end{center}
\end{figure}

In the 3-states model, the transition from the dark adapted close state $c$ and the open state $o$ is controlled by a function $u(t)$, proportional to the intensity of the light applied to the neuron. In our model, the intensity is then the control variable. The transition from the open state to the light adapted close state $d$ is spontaneous and has a time constant very small in front of the one of the transition from $d$ to $c$ (i.e. $1/K_d << 1/K_r$). This last transition represents the fact that the protein has to regenerate before being able to go through a new cycle. The 4-states model can be similarly interpreted. The transitions from closed states to open states are triggered by light stimulation and all the other transitions are independent of the intensity of the light applied to the neuron. Hence, $\varepsilon_1$, $\varepsilon_2$, $e_{12}$, $e_{21}$, $K_{d1}$, $K_{d2}$ and $K_r$ are all positive constants.  This constitutes our general assumption on the models we study. Indeed, we assume that the transitions from closed states to open states depend linearly on the light and that all the others are independent of the light. This assumption is not too heavy since it leads to models that  still reproduces the shape of the photocurrents produced by the channel, and experimentally measured. Furthermore, it makes our control system affine. The dynamical system based on Figures \ref{ChR2_3States} and \ref{ChR2_4States} is given by 

\begin{equation}\label{ChR2_3States_Dynamics}
\left\{
\begin{aligned}
\dot{o}(t) & = u(t)(1-o(t)-d(t)) - K_do(t)\\
\dot{d}(t) & = K_do(t) - K_rd(t),
\end{aligned}
\right.
\end{equation}

and 

\begin{equation}\label{ChR2_4States_Dynamics}
\left\{
\begin{aligned}
\dot{o}_1(t) & =  \varepsilon_1u(t)(1-o_1(t)-o_2(t)-c_2(t)) - (K_{d1} + e_{12})o_1(t) + e_{21}o_2(t),\\
\dot{o}_2(t) & =  \varepsilon_2u(t)c_2(t) + e_{12}o_1(t) - (K_{d2} + e_{21})o_2(t),\\
\dot{c}_2(t) & =  K_{d2}o_2(t) - (\varepsilon_2u(t)+K_r)c_2(t).
\end{aligned}
\right.
\end{equation}

In the 3-states model, the conductance of the ChR2 channel is assumed to be proportional to the probability $o(t)$ that the channel opens, so that the ion current associated to ChR2 channels is given by

\[
I_{ChR2}(t) = g_{ChR2}o(t)(V_{ChR2}-v(t)),
\]
with $v$ the membrane potential of the channel, $g_{ChR2}$ the maximal conductance of the channel and $V_{ChR2}$ the equilibrium potential of the channel. See Appendix \ref{NumericalConstantsChR2} for the numerical computation of these constants.
In the 4-states model, the open states are assumed to be of different conductivity so that 

\[
I_{ChR2}(t) = g_{ChR2}(o_1(t)+\rho o_2(t))(V_{ChR2}-v(t)),
\]

with $\rho\in (0,1)$.
We can now include these two models of ChR2 in a conductance-based model defined in the previous section.

\begin{definition}\label{DefControlledConductanceModel}

\begin{itemize}
\item[i)] We call ChR2-3-states controlled conductance-based model, the system given by 

\begin{equation}\label{3StatesControlledConductanceBasedModel}
\left\{
\begin{aligned}
\dot{x}(t) & = f_0(x(t)) + \frac{1}{C}g_{ChR2}o(t)(V_{ChR2}-x_1(t))\mathbf{e}_1\\
\dot{o}(t) & = u(t)(1-o(t)-d(t)) - K_do(t)\\
\dot{d}(t) & = K_do(t) - K_rd(t),
\end{aligned}
\right.
\end{equation}
with $\mathbf{e}_1=(1,0,\dots,0)\in \R^n$. We rewrite this system in $\mathbb{R}^{n+2}$ in the affine form 

\begin{equation}
\dot{y}(t) = \tilde{f}_0(y(t)) + u(t)f_1(y(t)), \quad t\in\mathbb{R}_+, 	
\end{equation}
with $y(\cdot)=(x(\cdot),o(\cdot),d(\cdot))$, $\tilde{f}_0(y) = (f_0(x)+ \frac{1}{C}g_{ChR2}o(t)(V_{ChR2}-x_1(t))\mathbf{e}_1,-K_do,K_do-K_rd)$ and $f_1(y) = (1-o-d)\partial_o$, where $\partial_o$ is the derivative with respect to the variable $o$.

\item[ii)] We call ChR2-4-states controlled conductance-based model, the system given by 

\begin{equation}\label{4StatesControlledConductanceBasedModel}
\left\{
\begin{aligned}
\dot{x}(t) & = f_0(x(t)) + \frac{1}{C}g_{ChR2}(o_1(t)+\rho o_2(t))(V_{ChR2}-x_1(t))\mathbf{e}_1\\
\dot{o}_1(t) & =  \varepsilon_1u(t)(1-o_1(t)-o_2(t)-c_2(t)) - (K_{d1} + e_{12})o_1(t) + e_{21}o_2(t),\\
\dot{o}_2(t) & =  \varepsilon_2u(t)c_2(t) + e_{12}o_1(t) - (K_{d2} + e_{21})o_2(t),\\
\dot{c}_2(t) & =  K_{d2}o_2(t) - (\varepsilon_2u(t)+K_r)c_2(t).
\end{aligned}
\right.
\end{equation}
We also rewrite the system in $\R^{n+3}$,

\begin{equation}
\dot{z}(t) = \hat{f}_0(z(t)) + u(t)f_2(z(t)), \quad t\in\mathbb{R}_+, 	
\end{equation}

with $z(\cdot)=(x(\cdot),o_1(\cdot),o_2(\cdot),c_2(\cdot))$,

\begin{align*}
\hat{f}_0(z) &= (f_0(x)+ \frac{1}{C}g_{ChR2}(o_1(t)+\rho o_2(t))(V_{ChR2}-x_1(t))\mathbf{e}_1,\\
& \qquad \qquad - (K_{d1} + e_{12})o_1 + e_{21}o_2,e_{12}o_1 - (K_{d2} + e_{21})o_2, K_{d2}o_2),
\end{align*}
and 
\[
f_2(z) = \varepsilon_1(1-o_1-o_2-c_2)\partial_{o_1} + \varepsilon_2c_2\partial_{o_2}- \varepsilon_2c_2\partial_{c_2}.
\]

\end{itemize}
\end{definition}

\begin{notation}
Let $k\in\N^*$. We use two ways to write a vector field $F:\R^k\rightarrow\R^k$. For $x\in\R^k$, we write either
\begin{itemize}
\item $F(x) = (F_1(x),\dots,F_k(x))$, or
\item $F(x) = F_1(x)\partial_1 + \cdots + F_k(x)\partial_k$,
\end{itemize}  
where $F_i:\R^k\rightarrow \R$ is the $i^{\mathrm{th}}$ coordinate of $F$ and $\partial_i$ is the partial derivative along the $i^{\mathrm{th}}$ direction, for $i\in\{1,\dots,k\}$.
\end{notation}

We already used this mixed notation in Definition \ref{DefControlledConductanceModel} above. The second notation will be useful for the computation of Lie brackets later in this paper.

Note that for a bounded measurable function $u:\R_+\rightarrow \R$ and a starting point $((o_0,d_0),(o_1,o_2,c_2))\in\R^2\times\R^3$, the systems (\ref{ChR2_3States_Dynamics}) and (\ref{ChR2_4States_Dynamics}) admit a unique solution, absolutely continuous  on $\R_+$. Thus, for all bounded measurable function $u:\R_+\rightarrow \R$ and all initial conditions $y_0\in\mathcal{D}\times\R^2$ and $z_0\in\mathcal{D}\times\R^3$, the systems (\ref{3StatesControlledConductanceBasedModel}) and (\ref{4StatesControlledConductanceBasedModel}) have a unique solution, defined on $\R_+$ and such that $x(\cdot)$ is of class $C^1$ and $(o(\cdot),d(\cdot))$ and $(o_1(\cdot),o_2(\cdot),c_2(\cdot))$ are absolutely continuous on $\R_+$.

\subsection{The minimal time spiking problem}

The control problem we are interested in here can be formulated for both ChR2 models. Consider a conductance-based neuron model in its resting state. If no light is applied to the neuron (i.e. $u\equiv 0$) then the system stays in this resting state. We want to find the optimal control that triggers a spike in minimum time when starting from the resting state. To do so, let $V_s > 0$ be the membrane potential that we decide to be corresponding to a spike. Since the control is proportional to the intensity of the light applied to the neuron, the control space $U$ will be a segment $[0,u_{max}]$, with $u_{max}>0$. Let $x_{eq}\in\R^n$ a resting state of the conductance-based model. In the next two sections, we formulate the mathematical problem for both ChR2 models.

\subsubsection{The ChR2 3-states model}

Let $y_0 = (x_{eq},0,0) \in\R^{n+2}$ be our starting point. The state $(0,0)$ for the system (\ref{ChR2_3States_Dynamics}) corresponds to a neuron being in the dark for quite a long period of time (i.e. all the ChR2 channels are in the dark adapted close state $c$). From $y_0$, we then want to reach in minimal time (denoted $t_f$) the manifold  

\begin{equation*}
M_s := \{y\in\mathbb{R}^{n+2} | y_1 = V_s\}.
\end{equation*}

As in Section \ref{PMPSection} we define $\mathcal{H} :\mathbb{R}^{n+2} \times \mathbb{R}^{n+2}\times \mathbb{R}_-\times U \rightarrow \mathbb{R}$ the Hamiltonian of the system for $(y,p,p^0,u)\in \mathbb{R}^{n+2}\times \mathbb{R}^{n+2}\times \mathbb{R}_-\times U$ by 

\begin{equation}\label{3StatesExtendedGeneralHamiltonian}
\mathcal{H}(y,p,p^0,u) := \langle  p,\tilde{f}_0(y)  \rangle + u \langle p,f_1(y) \rangle + p^0.
\end{equation}

This control problem falls into the framework of Section \ref{PMPSection}. If there is no singular extremal, the optimal control is bang-bang and is given by the sign of the switching function. Let $p : \R_+ \rightarrow \R^{n+2}$ be the adjoint vector of the Pontryagin Maximum Principle. The switching function reads, for $t\in [0,t_f]$,

\[
\varphi(t):=(1-o(t)-d(t))p_o(t) \text{  or also }(1-y_{n+1}(t)-y_{n+2}(t))p_{n+1}(t).
\]

In the absence of singular extremals, if we write $u^* : [0,t_f]\rightarrow U$ the optimal control, then 

\[
u^*(t) = u_{max}\mathbf{1}_{\varphi(t) > 0}, \quad \forall t\in[0,t_f].
\]

\subsubsection{The ChR2 4-states model}

We define here the same quantities for the 4-states model.
Let $z_0 = (x_{eq},0,0,0) \in\R^{n+3}$ be our starting point. From $z_0$, we then want to reach in minimal time (denoted $t_f$) the manifold  

\begin{equation*}
M_s := \{z\in\mathbb{R}^{n+3} | y_1 = V_s\}.
\end{equation*}
The Hamiltonian $\mathcal{H} :\mathbb{R}^{n+3} \times \mathbb{R}^{n+3}\times \mathbb{R}_-\times U \rightarrow \mathbb{R}$ is defined for $(z,q,q^0,u)\in \mathbb{R}^{n+3}\times \mathbb{R}^{n+3}\times \mathbb{R}_-\times U$ by 

\begin{equation}\label{4StatesExtendedGeneralHamiltonian}
\mathcal{H}(y,q,q^0,u) := \langle  q,\hat{f}_0(z)  \rangle + u \langle q,f_2(z) \rangle + q^0.
\end{equation}
Let $q : \R_+ \rightarrow \R^{n+2}$ be the adjoint vector of the Pontryagin Maximum Principle. The switching function writes, for $t\in [0,t_f]$,

\begin{align*}
\psi(t):=\varepsilon_1(1-o_1(t)-o_2(t)-c_2(t))q_{o_1}(t) + \varepsilon_2c_2(t)q_{o_2}(t)- \varepsilon_2c_2(t)q_{c_2}(t).
\end{align*}
Singular extremals correspond to vanishing switching functions. We will treat the two ChR2 models in a different way. Indeed, the 3-states model is theoretically tractable and is the object of the following section. The 4-states will be investigated numerically.

\subsection{The Goh transformation for the ChR2 3-states model}\label{GohSection}

We state and prove here our main reduction result regarding the existence of optimal singular controls for the ChR2-3-states control problem. 

\begin{theorem}\label{ReductionTheorem}
The existence of optimal singular extremals in the spiking problem in minimal time for the control system (\ref{3StatesControlledConductanceBasedModel}) is equivalent to the existence of optimal singular extremals in the same problem but for the reduced system on $\R^n$
\[
\dot{x} = f_0(x) + o\tilde{f_1}(x),
\]
where $o$ is the control variable and $\tilde{f_1}(x) =\frac{1}{C}g_{ChR2}(V_{ChR2}-x_1)\mathbf{e}_1 $. 

\end{theorem}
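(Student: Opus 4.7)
The plan is to analyze the Pontryagin Maximum Principle for the full system (\ref{3StatesControlledConductanceBasedModel}) in parallel with the reduced system, and establish a two-way correspondence between their singular extremals, which is precisely the content of the Goh transformation in this affine cascade setting. First I would write out the adjoint equations for the full system from the Hamiltonian (\ref{3StatesExtendedGeneralHamiltonian}):
\begin{align*}
\dot{p} &= -[\partial_x f_0(x) + o\,\partial_x \tilde{f_1}(x)]^T p,\\
\dot{p}_o &= -p\cdot\tilde{f_1}(x) + (u+K_d)p_o - K_d p_d,\\
\dot{p}_d &= u\,p_o + K_r p_d,
\end{align*}
where $p\in\R^n$ denotes the first $n$ components of the adjoint, and record the transversality conditions $p_o(t_f)=p_d(t_f)=0$ and $p_i(t_f)=0$ for $i=2,\dots,n$.

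I would then exploit that, starting from $(o,d)=(0,0)$, the closed-state probability $c=1-o-d$ remains strictly positive along any physiological trajectory. Hence the condition $\varphi(t)=(1-o(t)-d(t))\,p_o(t)\equiv 0$ on a subinterval $I\subset[0,t_f]$ reduces to $p_o\equiv 0$ on $I$, which is the starting point of the reduction.

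For the forward direction, I would take an optimal singular extremal $(y,p,p^0,u)$ with $p_o\equiv 0$ on $I$. Differentiating this identity along the trajectory and using the $p_o$-equation yields $p\cdot\tilde{f_1}(x)+K_d p_d = 0$ on $I$, while the $p_d$-equation decouples into the scalar linear ODE $\dot{p}_d=K_r p_d$. Differentiating the former relation once more and using the Lie bracket identity $\tfrac{d}{dt}[p\cdot\tilde{f_1}(x)]=p^T[f_0,\tilde{f_1}](x)$ (whose $o$-dependence cancels exactly), one obtains an algebraic relation which, together with the exponential structure of $p_d$ on $I$ and the terminal condition $p_d(t_f)=0$ propagated backward through any subsequent bang arcs, forces $p_d\equiv 0$ throughout $I$. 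Consequently $p\cdot\tilde{f_1}(x)\equiv 0$ on $I$, which is precisely the singular condition for the reduced system with adjoint $\tilde{p}:=p$, whose adjoint equation coincides with the $x$-component equation of the full system.

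For the converse, given a singular extremal $(x,\tilde{p},\tilde{p}^0,o)$ of the reduced system on $I$, I would lift it to the full system by setting $p:=\tilde{p}$, $p_o:=0$, $p_d:=0$, $p^0:=\tilde{p}^0$, and recovering the light intensity via $u(t)=(\dot{o}(t)+K_d o(t))/(1-o(t)-d(t))$, with $d$ solving its own ODE driven by $o$. The PMP conditions of the full system are then routine to check: the $p_o$ and $p_d$ adjoint equations collapse to $\dot{p}_o=-\tilde{\varphi}=0$ and $\dot{p}_d=0$, the switching function $\varphi$ vanishes identically on $I$, and the full Hamiltonian coincides with the reduced Hamiltonian and thus vanishes. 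The main technical obstacle is clearly the forward direction, specifically the argument that $p_d\equiv 0$ on $I$: the scalar decoupled dynamics $\dot{p}_d=K_r p_d$ together with the transversality at $t_f$ constrain $p_d$, but making its vanishing explicit requires a careful propagation of the adjoint through the bang arcs following the singular arc, which is where the Goh philosophy yields the dimensional reduction.
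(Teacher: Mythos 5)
Your setup (adjoint equations, $1-o-d>0$ forcing $p_o\equiv 0$ on the singular interval, and the differentiated relation $\langle p,\tilde f_1(x)\rangle+K_dp_d=0$) matches the paper's Lemma \ref{GohLemma} exactly, and your converse-direction lift with $p_o:=0$, $p_d:=0$ is fine on $I$. The genuine gap is in your forward direction, where you try to land directly on the $n$-dimensional system by proving $p_d\equiv 0$ on $I$. This step, which you yourself flag as "the main technical obstacle", does not go through: on the singular arc $\dot p_d=K_rp_d$, so $p_d$ is a generically nonzero exponential, and the transversality condition $p_d(t_f)=0$ cannot be propagated backward to $I$ through any subsequent bang arcs, because there $\dot p_d=up_o+K_rp_d$ with $p_o$ no longer zero. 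Nothing in the extremal equations forces $p_d$ to vanish on $I$; the only consequence of differentiating is that $\langle p,\tilde f_1(x)\rangle=-K_dp_d$ itself evolves exponentially at rate $K_r$, which is a constraint on the singular arc but not the vanishing you need. So the claimed pointwise identification of the full adjoint's $x$-component with a singular adjoint of the $n$-dimensional system is unjustified and false in general.

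The paper circumvents this entirely by a two-stage reduction. The Goh step (Lemma \ref{GohLemma}) only reduces to the intermediate $(n+1)$-dimensional system in the variables $(x,d)$ with control $o$, whose control vector field has components along both $\partial_1$ and $\partial_d$; the singular condition there is precisely $g_{ChR2}(V_{ChR2}-x_1)p_{x_1}+K_dp_d=0$, with no claim that $p_d$ vanishes. The passage from $(n+1)$ to $n$ dimensions is then made at the level of the optimal control \emph{problem}, not of the adjoint equations: the $d$-dynamics is decoupled from $x$ and the target manifold constrains only $x_1$, so the two minimal-time problems have identical admissible and optimal controls, and hence optimal singular extremals exist for one if and only if they exist for the other. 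If you want to complete your argument, you should replace the "$p_d\equiv 0$" step by this decoupling-plus-target observation; as written, the forward implication is not proved.
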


Every nonlinear control system of the form $\dot{x}=f(x,u)$ can be interpreted as an affine one by making the transformation $\dot{u}=v$ and considering the variable $v$ as the new control and the variable $(x,u)$ as the new state variable. The inverse transformation, called the Goh transformation, is a great tool for the investigation of singular extremals and will reveal itself fundamental here to show the absence of optimal singular trajectories in the models we will consider later. 

\begin{notations}	
To every couple of points $y:=(x,o,d)\in\R^{n+2}$ and $p:=(p_x,p_o,p_d)\in\R^{n+2}$ we associate a couple of points of $\R^{n+1}$ defined by $\tilde{y}:=(x,d)$ and $\tilde{p}:=(p_x,p_d)$. Moreover, we write the corresponding reduced Hamiltonian $\tilde{\mathcal{H}}$ defined for $(\tilde{y},\tilde{p},p^0)\in\R^{n+1}\times \R^{n+1}\times \R_-$ and $o\in\R$ by $\tilde{\mathcal{H}}(\tilde{y},\tilde{p},p^0,o):= \langle \tilde{p},\tilde{f}_0(\tilde{y})\rangle + o \langle \tilde{p},\tilde{f}_1(\tilde{y})\rangle + p^0$, where the vector field $\tilde{f}_0$ remains unchanged (it did not depend on the variable $o$) and the vector field $\tilde{f}_1$ is defined, for all $\tilde{y}\in\R^{n+1}$, by $\tilde{f}_1(\tilde{y}) := g_{ChR2}(V_{ChR2}-\tilde{y}_1)\partial_1$.
\end{notations}

The following lemma is the first step to reduce the dimension of the system that has to be considered to investigate the existence of singular extremals.

\begin{lemma}\label{GohLemma}

$(y,p)$ is the projection, on the space of continuous functions from $\R_+$ to $\R^{n+2}\times\R^{n+2}$,  of a solution $(y,p,u)$ of

\begin{equation}\label{LemmaSingularSystem}
\dot{y}(t) = \frac{\partial \mathcal{H}}{\partial p}(y(t),p(t),p^0,u(t)), \quad \dot{p}(t) = -\frac{\partial \mathcal{H}}{\partial y}(y(t),p(t),p^0,u(t)), \quad \langle p(t),f_1(y(t)) \rangle =0.
\end{equation} 
if and only if $p_o\equiv 0$ and $(\tilde{y},\tilde{p})$ is a solution of 

\begin{equation}\label{LemmaSingularReducedSystem}
\dot{\tilde{y}}(t) = \frac{\partial \tilde{\mathcal{H}}}{\partial \tilde{p}}(\tilde{y}(t),\tilde{p}(t),p^0,o(t)), \quad \dot{\tilde{p}}(t) = -\frac{\partial \tilde{\mathcal{H}}}{\partial \tilde{y}}(\tilde{y}(t),\tilde{p}(t),p^0,o(t)), \quad \langle \tilde{p}(t),\tilde{f}_1(\tilde{y}(t)) \rangle =0.
\end{equation}

\end{lemma}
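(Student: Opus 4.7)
The plan is to exploit the factorization of the switching function $\varphi(t) := \langle p(t), f_1(y(t))\rangle = (1 - o(t) - d(t))\, p_o(t)$ in conjunction with the physiological constraint $1 - o(t) - d(t) > 0$. This constraint reflects the probabilistic interpretation of $o, d$ as two of three mutually exclusive channel-state probabilities summing to one; quantitatively, from the initial values $o_0 = d_0 = 0$ and the inequality $\tfrac{d}{dt}(o+d) \leq u(1-o-d)$, a Gronwall-type argument yields $1 - o(t) - d(t) \geq \exp(-\int_0^t u(s)\,ds) > 0$ for every finite $t \geq 0$. Under this condition, the singular relation $\varphi \equiv 0$ on an interval $I$ is equivalent to $p_o \equiv 0$ on $I$, which is the pivot of the entire argument.

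For the forward implication, starting from $(y, p, u)$ satisfying (\ref{LemmaSingularSystem}), the reduction above gives $p_o \equiv 0$ on $I$, and hence $\dot{p}_o \equiv 0$ almost everywhere. Computing $\dot{p}_o = -\partial \mathcal{H}/\partial o$ from (\ref{3StatesExtendedGeneralHamiltonian}) and substituting $p_o = 0$ yields
\[
\tfrac{1}{C}g_{ChR2}(V_{ChR2} - x_1)\, p_{x_1} + K_d\, p_d = 0,
\]
which is precisely the reduced singular condition $\langle \tilde{p}, \tilde{f}_1(\tilde{y})\rangle = 0$. Moreover, the restriction $\mathcal{H}|_{p_o=0}$ of the Hamiltonian loses its dependence on the original control $u$, and on the variables $(x, d, p_x, p_d)$ coincides with $\tilde{\mathcal{H}}$ viewed as a function of $(\tilde{y}, \tilde{p})$ with $o$ in the role of the new control; consequently the Hamilton equations for $(x, d)$ and $(p_x, p_d)$ project onto those of (\ref{LemmaSingularReducedSystem}).

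For the converse, given $(\tilde{y}, \tilde{p})$ satisfying (\ref{LemmaSingularReducedSystem}) with reduced control $o(\cdot)$, one sets $p := (p_x, 0, p_d)$, so that $p_o \equiv 0$. The $x$- and $d$-components of the full Hamilton system, as well as the $p_x$ and $p_d$ adjoint equations, hold by construction. The $o$-equation of (\ref{3StatesControlledConductanceBasedModel}) is then inverted to define the original control $u := (\dot{o} + K_d o)/(1-o-d)$, which is well defined because $1-o-d > 0$. The adjoint equation $\dot{p}_o = -\partial \mathcal{H}/\partial o$ collapses to $-\langle \tilde{p}, \tilde{f}_1(\tilde{y})\rangle$, which vanishes by the reduced singular condition, in agreement with $p_o \equiv 0$. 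The switching identity $(1-o-d)p_o = 0$ is then trivially verified.

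The main obstacle is a regularity issue in the converse direction: a priori the reduced control $o(\cdot)$ is only measurable, whereas the recovery formula $u = (\dot{o} + K_d o)/(1-o-d)$ requires $o$ to be absolutely continuous. This is the usual subtlety attached to the Goh transformation, and can be addressed by noting that on a genuine singular arc, successive differentiations of the switching condition typically determine the singular control as a smooth feedback function of the reduced state and adjoint, thus providing the required regularity in the specific models treated later in the paper.
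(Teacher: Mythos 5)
Your proof is correct and follows the same overall strategy as the paper's: factor the switching function as $(1-o-d)p_o$, rule out the vanishing of $1-o-d$, conclude $p_o\equiv 0$, differentiate that identity (via $\dot p_o=-\partial\mathcal H/\partial o$) to obtain $\langle\tilde p,\tilde f_1(\tilde y)\rangle=0$, and observe that the surviving Hamilton equations in $(x,d,p_x,p_d)$ are exactly those of the reduced system with $o$ as control. The one place where you genuinely diverge is the non-degeneracy step: you prove $1-o(t)-d(t)\geq \exp\left(-\int_0^t u(s)\,ds\right)>0$ by a Gronwall/invariance argument from the initial datum $(o_0,d_0)=(0,0)$, whereas the paper argues locally on the singular interval $I$: by continuity either $p_o\equiv 0$ or $1-o-d\equiv 0$ on $I$, and the latter branch is excluded because it would force $d\equiv 0$ and $o\equiv 1$ simultaneously with $\dot o=-K_d o$, a contradiction. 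Your version buys a global strict inequality (also used implicitly later, e.g.\ when the sign of $\varphi$ is reduced to that of $p_o$ in Proposition \ref{PropFHN}), but it is tied to the specific initial condition of the spiking problem, while the paper's dichotomy applies to any solution of (\ref{LemmaSingularSystem}) and so matches the lemma as literally stated; if you keep your route, you should say explicitly that the lemma is being applied only to extremals issued from $y_0=(x_{eq},0,0)$. Your extra care in the converse direction, recovering $u=(\dot o+K_d o)/(1-o-d)$ and flagging the absolute continuity of $o$, addresses a point the paper dismisses as straightforward; the regularity caveat is real but harmless here, since the lemma is only ever invoked in the forward direction to transfer nonexistence of singular extremals from the reduced system back to the full one.
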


This lemma shows that singular extremals of (\ref{3StatesControlledConductanceBasedModel}) are directly related to singular extremals of the following, and still affine control system: 

\begin{equation}\label{GohControlledConductanceBasedModel}
\left\{
\begin{aligned}
\dot{x}(t) & = f_0(x(t)) + g_{ChR2}o(t)(V_{ChR2}-x_1(t))\mathbf{e}_1,\\
\dot{d}(t) & = K_do(t) - K_rd(t),
\end{aligned}
\right.
\end{equation}
where the control is now the variable $o$.

In the models that we are going to study in the sequel, we will see that this transformation allows to conclude to the absence of optimal singular extremals.

Before going through the proof of Lemma \ref{GohLemma}, let us here introduce the notion of Lie brackets for regular vector fields. We give two equivalent definitions, depending on the notation used for the vector fields.

Let $k\in\N^*$ and $g,h : \R^k\rightarrow \R^k$ two vector fields of class $C^1$. Let $(g_1,\dots,g_k)$ and $(h_1,\dots,h_k)$ their coordinate mappings. The Lie bracket $[g,h]: \R^k\rightarrow \R^k$ of $g$ and $h$ is the vector field defined for $x\in\R^k$ by 

\[
[g,h](x) = J_h(x)g(x) - J_g(x)h(x),
\]
or equivalently by 

\[
[g,h](x) = \sum_{i=1}^k\sum_{j=1}^k \big(g_j(x)\partial_jh_i(x) - h_j(x)\partial_jg_i(x)\big)\partial_i,
\]
where $J_h$ and $J_g$ are the Jacobian matrices of $h$ and $g$. The expression $J_h(x)g(x)$ has to be understood as the product of the $k\times k$-matrix by the $k$-vector. Further in this paper we will use the convenient notation $$\mathrm{ad}_h g := [h,g]$$ that allows to reduce expressions of multiple Lie brackets. Finally, one important relation for the computation of singular controls is the following. Let $(x^u,p)$ be an extremal pair of the Pontryagin maximum principle associated to a control $u$. Then for any smooth vector field $h:\R^k\rightarrow \R^k$ and all $t\in[0,t_f]$,

\begin{equation}\label{derivative}
\frac{\mathrm{d}}{\mathrm{d}t} \langle p(t), h(x^u(t))\rangle  = \langle p(t), [F_0,h](x^{u}(t))\rangle + u(t)\langle p(t), [F_1,h](x^{u}(t))\rangle. 
\end{equation}
\begin{proof}{ of Lemma \ref{GohLemma}.} The proof comes from the general result of Section 1.9.4 of \cite{BonnardKupka} and the shape of our particular model. If we keep on writing $y=(x,o,d)$, system (\ref{LemmaSingularSystem}) gives on an interval $I$ of $[0,t_f]$:

\begin{equation}\label{DevelopedSystem}
\left\{
\begin{aligned}
\dot{x} &= f_0(x) + g_{ChR2}o(V_{ChR2}-x_1)\mathbf{e}_1,\\
\dot{d} &= k_do-k_rd,\\
\dot{o}&=(1-o-d)u-K_do,\\
\dot{p}_x &= -J_{f_0}^tp_x + g_{ChR2}p_o\mathbf{e}_1,\\
\dot{p}_d &= up_o +K_rp_d,\\
\dot{p}_o & = -g_{ChR2}(V_{ChR2}-x_1)p_{x_1} -K_dp_d+(u+K_d)p_o,\\
0&=(1-o-d)p_o,
\end{aligned}
\right.
\end{equation}
where $J_{f_0}^t$ is the transpose of the Jacobian matrix of $\tilde{f}_0$.
For continuity reasons, we get that either $p_o\equiv0$ or $(1-o-d)\equiv 0$ on $I$. If $(1-o-d)\equiv 0$ then $-K_rd = \dot{o}+\dot{d}\equiv 0$ so that $d\equiv 0$ and $o\equiv 1$. But $d\equiv 0 \Rightarrow \dot{d}\equiv 0$ so that $\dot{o}\equiv 0$ which is incompatible with $o\equiv 1$, since $\dot{o}=-K_do$. We conclude that, necessarily, $p_o\equiv0$ on $I$. This equality implies that $\dot{p}_o\equiv 0$ and from the penultimate equation of (\ref{DevelopedSystem}) we get $-g_{ChR2}(V_{ChR2}-x_1)p_{x_1} -K_dp_d\equiv 0$ which also writes $\langle \tilde{p},\tilde{f}_1(\tilde{y}) \rangle \equiv 0$. Now the first two equations of (\ref{DevelopedSystem}) correspond to 
\[
\dot{\tilde{y}}(t) = \frac{\partial \tilde{\mathcal{H}}}{\partial \tilde{p}}(\tilde{y}(t),\tilde{p}(t),p^0,o(t)),
\]
and the $4^{\mathrm{th}}$ and $5^{\mathrm{th}}$ equations correspond to 

\[
\dot{\tilde{p}}(t) = -\frac{\partial \tilde{\mathcal{H}}}{\partial \tilde{y}}(\tilde{y}(t),\tilde{p}(t),p^0,o(t)).
\]
We just showed that (\ref{LemmaSingularSystem}) $\Rightarrow$ ($p_o\equiv 0$ and (\ref{LemmaSingularReducedSystem})). The inverse implication is straightforward.

\end{proof}

\begin{proof}[Proof of Theorem \ref{ReductionTheorem}]
The result of Lemma \ref{GohLemma} is the first step of the proof. To finish up with it, consider the spiking problem in minimum time for the reduced system (\ref{GohControlledConductanceBasedModel}) : 
\begin{equation*}
\left\{
\begin{aligned}
\dot{x}(t) & = f_0(x(t)) + g_{ChR2}o(t)(V_{ChR2}-x_1(t))\mathbf{e}_1,\\
\dot{d}(t) & = K_do(t) - K_rd(t),
\end{aligned}
\right.
\end{equation*}

Remark that the dynamics of the variables $x$ and $d$ are completely decoupled. Furthermore, the targeted manifold is only defined by the location of variable $x_1$. These two remarks imply that an optimal control for system (\ref{GohControlledConductanceBasedModel}) has to be optimal for the even more reduced control system :

\begin{equation*}
\dot{x}(t) = f_0(x(t)) + g_{ChR2}o(t)(V_{ChR2}-x_1(t))\mathbf{e}_1.
\end{equation*}
\end{proof}

\subsection{Lie bracket configurations for the ChR2 4-states model}

In the case of the ChR2 4-states model, we will observe numerically that the optimal control is bang-bang for various values of the maximum intensity $u_{max}$. Here we give the expression of the first Lie brackets.
In most cases, a singular optimal control $\bar{u}$ would have the expression

\[
\bar{u}(t) = \frac{\langle q(t) , \mathrm{ad}^2_{\hat{f}_0}f_2(z(t))\rangle}{\langle q(t) , \mathrm{ad}^2_{f_2}\hat{f}_0(z(t))\rangle}.
\]
Indeed, if $I$ is an interval of $[0,t_f]$ on with the switching function $\psi$ vanishes, then for $t\in I$,

\begin{align*}
\psi(t) &= 0,\\
\dot{\psi}(t) & =  \langle q(t), [\hat{f}_0,f_2](z(t)) \rangle = 0 ,\\
\ddot{\psi}(t) & = \langle q(t) , \mathrm{ad}^2_{\hat{f}_0}f_2(z(t))\rangle -\bar{u}(t)\langle q(t) , \mathrm{ad}^2_{f_2}\hat{f}_0(z(t))\rangle = 0 ,	
\end{align*}

The expressions of $[\hat{f}_0,f_2]$ and $\mathrm{ad}^2_{f_2}\hat{f}_0$ are not too much complicated since theses brackets have non zero components only on the directions $z_1$, $z_{n+1}$, $z_{n+2}$ and $z_{n+3}$ (independently of $n\in\N^*$), which we also write $v$, $o_1$, $o_2$ and $c_2$. We will not give the expression of $\mathrm{ad}^2_{\hat{f}_0}f_2$ because it is too long and of small interest since we will treat the problem numerically. Let us just mention that it has non zero components on all the directions of the state space $\R^{n+3}$.

\begin{align*}
[\hat{f}_0,f_2](z) & = -\Big(\varepsilon_1(1-o_1-o_2-c_2)+\varepsilon_2\rho c_2\Big)\frac{1}{C}g_{ChR2}(V_{ChR2}-v)\partial_v\\
& \quad +\Big(\varepsilon_1(1-o_1-o_2-c_2)(e_{12}+K_{d1})+\varepsilon_1K_{d1}o_1-(\varepsilon_1K_r-\varepsilon_2e_{21})c_2\Big)\partial_{o_1}\\
& \quad + \Big(-\varepsilon_1(1-o_1-o_2-c_2)e_{12}+\varepsilon_2K_{d2}o_2+ \varepsilon_2(e_{21}+K_{d2}-K_r)c_2\Big)\partial_{o_2}\\
& \quad - \varepsilon_2K_{d2}(o_2+c_2)\partial_{c_2},
\end{align*}

\noindent and

\begin{align*}
\mathrm{ad}^2_{f_2}\hat{f}_0(z) & = -\Big((\varepsilon_1)^2(1-o_1-o_2-c_2)+(\varepsilon_2)^2\rho c_2\Big)\frac{1}{C}g_{ChR2}(V_{ChR2}-v)\partial_v\\
& \quad -\varepsilon_1\Big(\varepsilon_1(1-o_1-o_2-c_2)(e_{12}+K_{d1})+\varepsilon_1K_{d1}o_1-(\varepsilon_1K_r-\varepsilon_2e_{21})c_2\Big)\partial_{o_1}\\
& \quad - \Big((\varepsilon_1)^2(1-o_1-o_2-c_2)e_{12}+(\varepsilon_2)^2K_{d2}o_2+ (\varepsilon_2)^2(-e_{21}+K_{d2}-K_r)c_2\Big)\partial_{o_2}\\
& \quad + (\varepsilon_2)^2K_{d2}(o_2+c_2)\partial_{c_2}.
\end{align*}

\section{Application to some neuron models with numerical results}\label{SectionApplication}
\markboth{Application and numerical results}{}

In this section, we apply the reduction results of Section \ref{GohSection} to some widely used models and support our theoretical results with numerical results. These theoretical results regard the ChR2-3-states model and we also investigate numerically the associated ChR2-4-states models. The numerical results are obtained by direct methods based on the \verb?ipopt? routine \cite{Ipopt} to solve nonlinear optimization problems, and implemented with the \verb?ampl? language \cite{AmplBook}. For a survey on numerical methods in optimal control, see \cite{TrelatAerospace}. The numerical values used for the ChR2-3-states and 4-states models are those of Appendices \ref{AppendixChR2_3States} and \ref{AppendixChR2_4States}. For each neuron model that we study, namely the FitzHugh-Nagumo model, the Morris-Lecar model and the reduced and complete Hodgkin-Huxley models, we implement the direct method for the ChR2-3-states and 4-states models and compare them. We repeat the computation for several values of the maximum control value in order to try and detect possible singular optimal controls. Indeed, it would be possible that a singular optimal control only appears above some threshold of the maximal control value. Nevertheless, no model numerically displays such controls. We then compare the neuron models between them in terms of their behavior with respect to optogenetic control. Physiologically, Channelrhdopsin has a depolarizing effect on a neuron membrane so that it is physiologically intuitive to expect that we need to switch on the light to obtain a spike, and the more light we put in the system, the faster the spike will occur.
We propose to distinguish between two classes of models. The first class comprises neuron models that display the intuitive physiological response to optogenetic stimulation and the second class comprises neuron models that display an unexpected response.

\subsection{The FitzHugh-Nagumo model}

The FitzHugh-Nagumo model is not exactly a conductance-based model but a two-dimensional simplification of the Hodgkin-Huxley model. This model takes his name from the initial work of FitzHugh \cite{FitzHugh} who suggested the system and Nagumo \cite{Nagumo} who gave the equivalent circuit. The idea was to find a simpler model that still featured the mathematical properties of excitation and propagation.

\paragraph{The ChR2-3-states model}\hspace*{0pt}\\

\noindent The $ChR2$-3-states controlled FitzHugh-Nagumo model is

\begin{equation*}
(FHN)\left\{
\begin{aligned}
\dot{v}(t) & =  v(t)-\frac{1}{3}v^3(t)-w(t) +\frac{1}{C}g_{ChR2}o(t)(V_{ChR2}-v(t)),\\ 
\dot{w}(t) & = c(v(t) + a -bw(t)),\\
\dot{o}(t) & = u(t)(1-o(t)-d(t)) - K_do(t),\\
\dot{d}(t) & = K_do(t) - K_rd(t),
\end{aligned}
\right.
\end{equation*}
where $v$ is the membrane potential and $w$ a conductance-like variable that provides a negative feedback, and $a$, $b$ and $c$ are constants. In the original model, the numerical values of these constants were $a=0.7$, $b=0.8$ and $c=0.08$. The adjoint equations write 

\begin{equation*}
(FHN_{adj})\left\{
\begin{aligned}
\dot{p}_v(t) & =  -p_v(t)(1-v^2(t)-\frac{1}{C}g_{ChR2}o(t))-cp_w(t),\\ 
\dot{p}_w(t) & = p_v(t) + bcp_w(t),\\
\dot{p}_o(t) & = -\frac{1}{C}g_{ChR2}(V_{ChR2}-v(t))p_v(t) + (u(t)+K_d)p_o(t) -K_dp_d(t),\\
\dot{p}_d(t) &= u(t)p_o(t) + K_rp_d(t),
\end{aligned}
\right.
\end{equation*}
and the switching function is $\varphi(t) = (1-o(t)-d(t))p_o(t)$. The following lemma gives the optimal control for the minimal time control of the ChR2-controlled FitzHugh-Nagumo model.

\begin{proposition}\label{PropFHN}
The optimal control $u^*:\R_+\rightarrow U$ for the minimal time control of the FitzHugh-Nagumo model is bang-bang and given by 

\begin{equation*}
u^*(t) = u_{max} \mathbf{1}_{p_o(t)>0}, \quad \forall t\in[0,t_f].
\end{equation*}
Furthermore, the optimal control begins with a bang arc of maximal value, i.e.

\[
\exists t_1\in [0,t_f], u^*(t)=u_{max}, \forall t\in[0,t_1]. 
\]

\end{proposition}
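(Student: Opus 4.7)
The plan is to first use the Goh reduction (Theorem~\ref{ReductionTheorem}) to transfer the question of optimal singular extremals to the two-dimensional affine system in $(v,w)$ with control $o$,
\begin{align*}
\dot v &= v - \tfrac{1}{3}v^3 - w + \tfrac{o}{C}g_{ChR2}(V_{ChR2}-v), \\
\dot w &= c(v+a-bw),
\end{align*}
that is $\dot x = F_0(x)+oF_1(x)$ with $F_1(v,w)=\bigl(\tfrac{1}{C}g_{ChR2}(V_{ChR2}-v),\,0\bigr)$. Vanishing of the reduced switching function $\langle \tilde p,F_1\rangle$ on an interval $I\subset[0,t_f]$ forces, by continuity, either $p_v\equiv 0$ on $I$ or $v\equiv V_{ChR2}$ on $I$, and I would rule out both.

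For the case $p_v\equiv 0$, I would differentiate along the adjoint equation $\dot p_v=-(1-v^2)p_v+\tfrac{o}{C}g_{ChR2}p_v-cp_w$ to deduce $p_w\equiv 0$ on $I$. Linearity of the reduced adjoint ODE propagates $(p_v,p_w)\equiv 0$ to all of $[0,t_f]$, and the Hamiltonian identity $\mathcal H\equiv 0$ then forces $p^0=0$, contradicting the non-triviality of $(p,p^0)$. For the case $v\equiv V_{ChR2}$, the control drops out of $\dot v$ and $\dot v=0$ imposes $w\equiv w^\star:=V_{ChR2}-V_{ChR2}^3/3$; then $\dot w=0$ gives the algebraic identity $V_{ChR2}+a-bw^\star=0$. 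If this identity fails (the generic situation for the values in the appendix), the singular case is impossible. If it happens to hold, then $(V_{ChR2},w^\star)$ is an equilibrium of $F_0+oF_1$ for every $o\in U$ (since $o$ enters neither $\dot v$ nor $\dot w$ there), so by uniqueness of ODE solutions any trajectory that reaches it stays there forever and never hits $M_s$, contradicting optimality.

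This establishes the absence of singular extremals, hence the optimal control is bang-bang. The sign is read off the switching function $\varphi=(1-o-d)p_o$; the standard invariance argument for \eqref{ChR2_3States_Dynamics} shows that the triangle $\{o,d\ge 0,\,o+d\le 1\}$ is positively invariant and that $1-o-d>0$ on $(0,t_f]$, so the sign of $\varphi$ coincides with that of $p_o$, yielding $u^*(t)=u_{max}\mathbf 1_{p_o(t)>0}$.

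For the initial bang at $u_{max}$, I would argue by contradiction. Suppose $u^*\equiv 0$ on $[0,\tau]$ with $\tau>0$. Then \eqref{ChR2_3States_Dynamics} gives $o\equiv d\equiv 0$ on $[0,\tau]$, so the ChR2 current vanishes and $(v,w)$ remains at the resting state throughout. Shifting the admissible control $u^*$ in time by $\tau$ then produces an admissible trajectory reaching $M_s$ in time $t_f-\tau<t_f$, contradicting the minimality of $t_f$. Hence $u^*(0)=u_{max}$, and continuity of $\varphi$ at $0$ yields $t_1>0$ such that $u^*\equiv u_{max}$ on $[0,t_1]$.
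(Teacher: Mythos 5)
Your proof is correct and follows essentially the same route as the paper: Goh reduction to the planar $(v,w)$ system, the dichotomy $p_v\equiv 0$ versus $v\equiv V_{ChR2}$ on the interval where the reduced switching function vanishes, propagation of $p\equiv 0$ to contradict the non-triviality of $(p,p^0)$, positivity of $1-o-d$ to read off the bang-bang law, and a contradiction with time minimality for the initial maximal arc. You are in fact slightly more careful than the paper in the $v\equiv V_{ChR2}$ branch, where the paper simply asserts that $(V_{ChR2},w)$ is never an equilibrium of the uncontrolled system while you also dispose of the degenerate case in which it is.
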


\begin{proof}
Let us show that there is no optimal singular extremals. The results for conductance-based models given in section \ref{GohSection} are straightforwardly applicable to the FitzHugh-Nagumo model and the reduced control system is the following

\begin{equation*}
(FHN')\left\{
\begin{aligned}
\dot{v}(t) & =  v(t)-\frac{1}{3}v^3(t)-w(t) +\frac{1}{C}g_{ChR2}u(t)(V_{ChR2}-v(t))\\ 
\dot{w}(t) & = c(v(t) + a -bw(t))\\
\end{aligned}
\right.
\end{equation*}
The adjoint equations for this system are 

\begin{equation*}
(FHN'_{adj})\left\{
\begin{aligned}
\dot{p}_v(t) & = -p_v(t)(1-v^2(t)-\frac{1}{C}g_{ChR2}u(t))-cp_w(t)\\ 
\dot{p}_w(t) & = p_v(t) + bcp_w(t)\\
\end{aligned}
\right.
\end{equation*}
The vector fields defining the affine system ($FHN'$) are

\begin{equation*}
\begin{aligned}
f_0(v,w) & =  (v-\frac{1}{3}v^3-w)\partial_v + c(v + a -bw)\partial_w\\ 
f_1(v,w) & = \frac{1}{C}g_{ChR2}(V_{ChR2}-v)\partial_v\\
\end{aligned}
\end{equation*}
For the reduced system, the switching function is given by 

\begin{equation*}
\phi(t) = 	\langle p(t), f_1(v(t),w(t))\rangle = \frac{1}{C}g_{ChR2}(V_{ChR2}-v(t))p_v(t).
\end{equation*}

\paragraph{Investigation of singular trajectories}\hspace*{0pt}\\

Assume that there exists an open interval $I$ along which the switching function vanishes. Then for all $t\in I$,
\begin{equation*}
\langle p(t), f_1(v(t),w(t))\rangle = 0.
\end{equation*}
 By continuity, this means that either $v$ is constant and equals $V_{ChR2}$ on $I$ or $p_v$ vanishes on $I$. The constant case is not possible since it implies from the dynamical system ($FHN$) that $w$ would also be constant on $I$, but $(V_{ChR2},w)$ is not an equilibrium point of the uncontrolled system, for any $w\in\R$. Then, necessarily, $p_v$ vanishes on $I$. This implies that $\dot{p}_v$ also vanishes and from ($FHN_{adj}$), $p_w$ vanishes on $I$. This is incompatible from the Pontryagin maximum principle. 

We showed that the reduced system does not present any singular extremals and from Theorem \ref{ReductionTheorem}, the original system $(FHN)$ does not either. The optimal control is then bang-bang and is given by the sign of the switching function of the original system. Taking into account that for all $t\in[0,t_f]$, $1-o(t)-d(t) > 0$ we get 

\begin{equation*}
u^*(t) = u_{max} \mathbf{1}_{p_o(t)>0}, \quad \forall t\in[0,t_f].
\end{equation*}

Finally, to show that the first arc correspond to a maximal control, suppose that $u^*(0) = 0$. Then system ($FHN$) stays in its resting state, contradicting time optimality.

\end{proof}

We implement the direct method for this problem with a targeted action potential $V_s := 1.5mV$ and a control evolving in $[0,0.1]$. The numerical values of the constants $(a,b,c)$ are set to the usual values $(0.7,0.8,0.08)$. Since this model is not physiological, we chose the values for the constants $C$, $g_{ChR2}$, $V_ChR2$ and $u_{max}$ quite arbitrarily, with the constraint that the behavior of the control system should not stray away from the uncontrolled system. When the control is off, the system stays at rest, as seen on Figure \ref{NoStimulation2}.

\begin{figure}[!ht]
\begin{center}
\includegraphics*[scale=0.55]{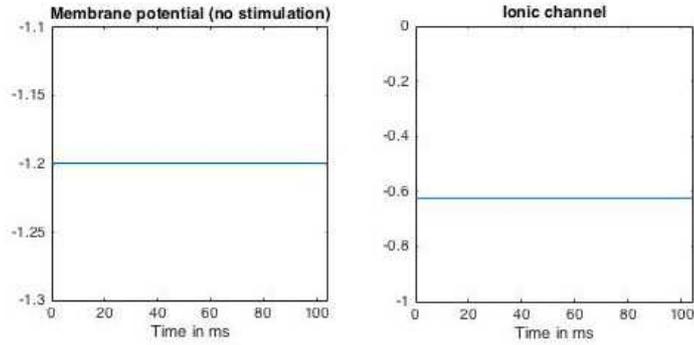}
\end{center}
\caption{In the absence of stimulation, the neuron stays in its resting state.}
\label{NoStimulation2}
\end{figure}

We represent on Figure \ref{FHN_ChR2_3States} the evolution of the optimal trajectory of the membrane potential and the optimal control. As predicted, the optimal control is bang-bang and starts with a maximal arc. It has a unique switching time which means that there is no need to keep the light on all the way to the spike, an interesting fact for the controller. This optimal control can be qualified as physiological, the light must stay on until a point where the system is "launched" toward the spike and no further illumination is required.


\begin{figure}[!ht]
\begin{center}
\includegraphics[scale=0.5]{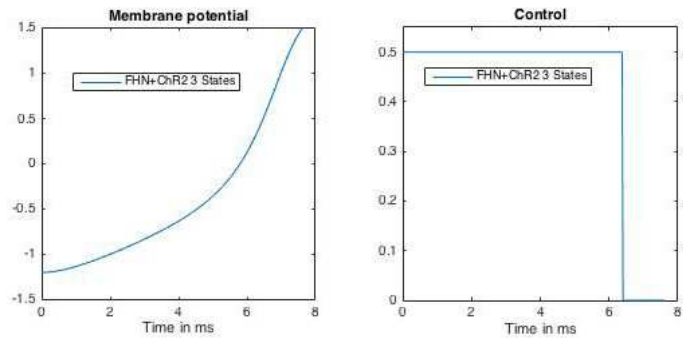}
\end{center}
\caption{Optimal trajectory and control for the FHN-ChR2-3-states model with $u_{max}=0.5\,\mathrm{ms}^{-1}$.}
\label{FHN_ChR2_3States}
\end{figure}

\paragraph{The ChR2-4-states model}\hspace*{0pt}\\

The ChR2-4-states model gives the same shape of optimal trajectory and control. We can compare the two ChR2 models and observe the results for different values of $u_{max}$ on Figure \ref{FHN_ChR2_3and4States}. The ChR2-4-states model outperforms the ChR2-3-states on two scales. It leads to a faster spike while requiring less time in the light to fire. This phenomenon seems to be independent of the maximal value of the control. The gain is of around $6\%$ in the four cases.

\clearpage

\begin{figure}[!ht]
\centering
\begin{tabular}{c}
   a) \includegraphics[scale=0.52]{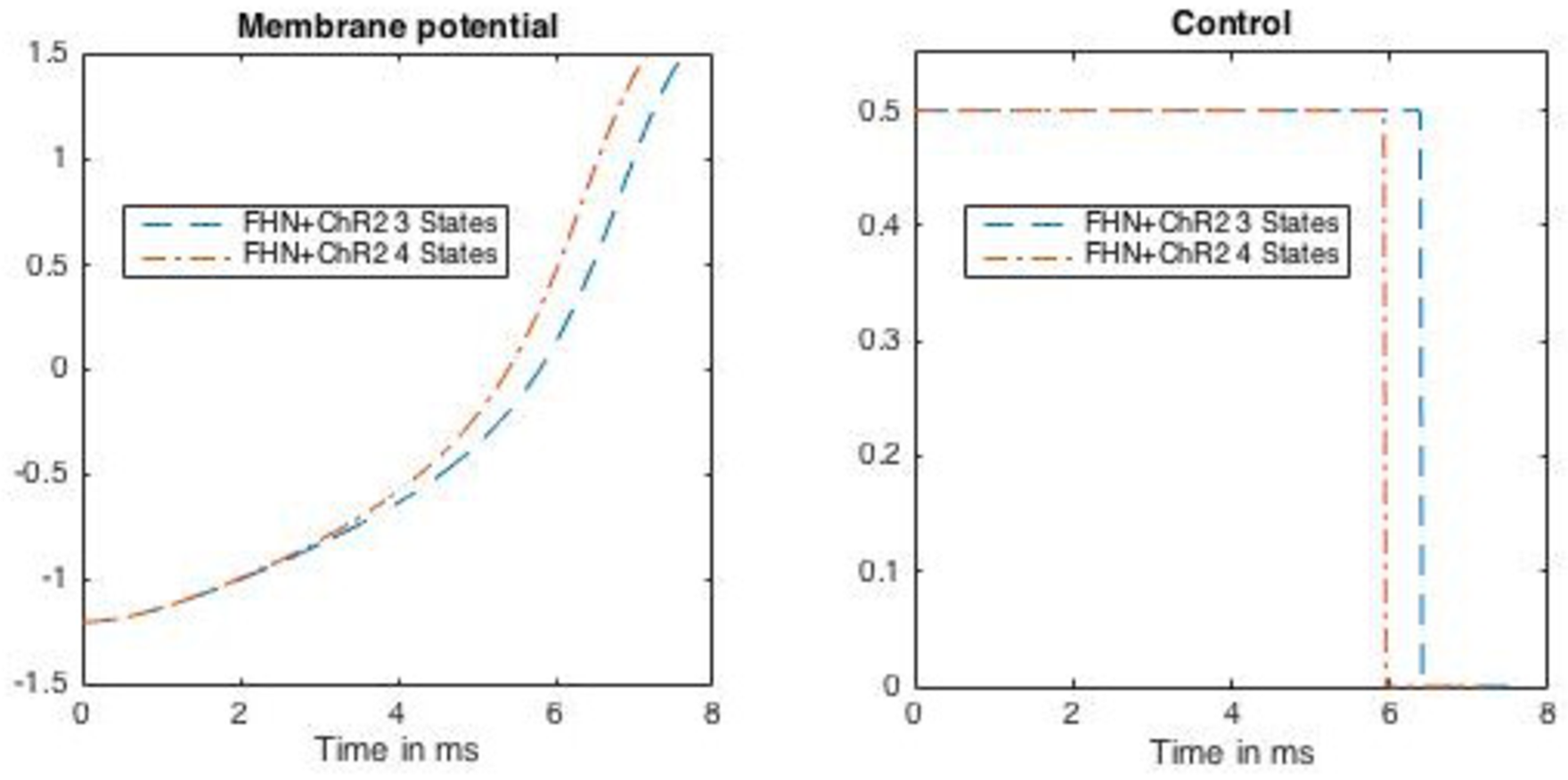} \\
   b) \includegraphics[scale=0.52]{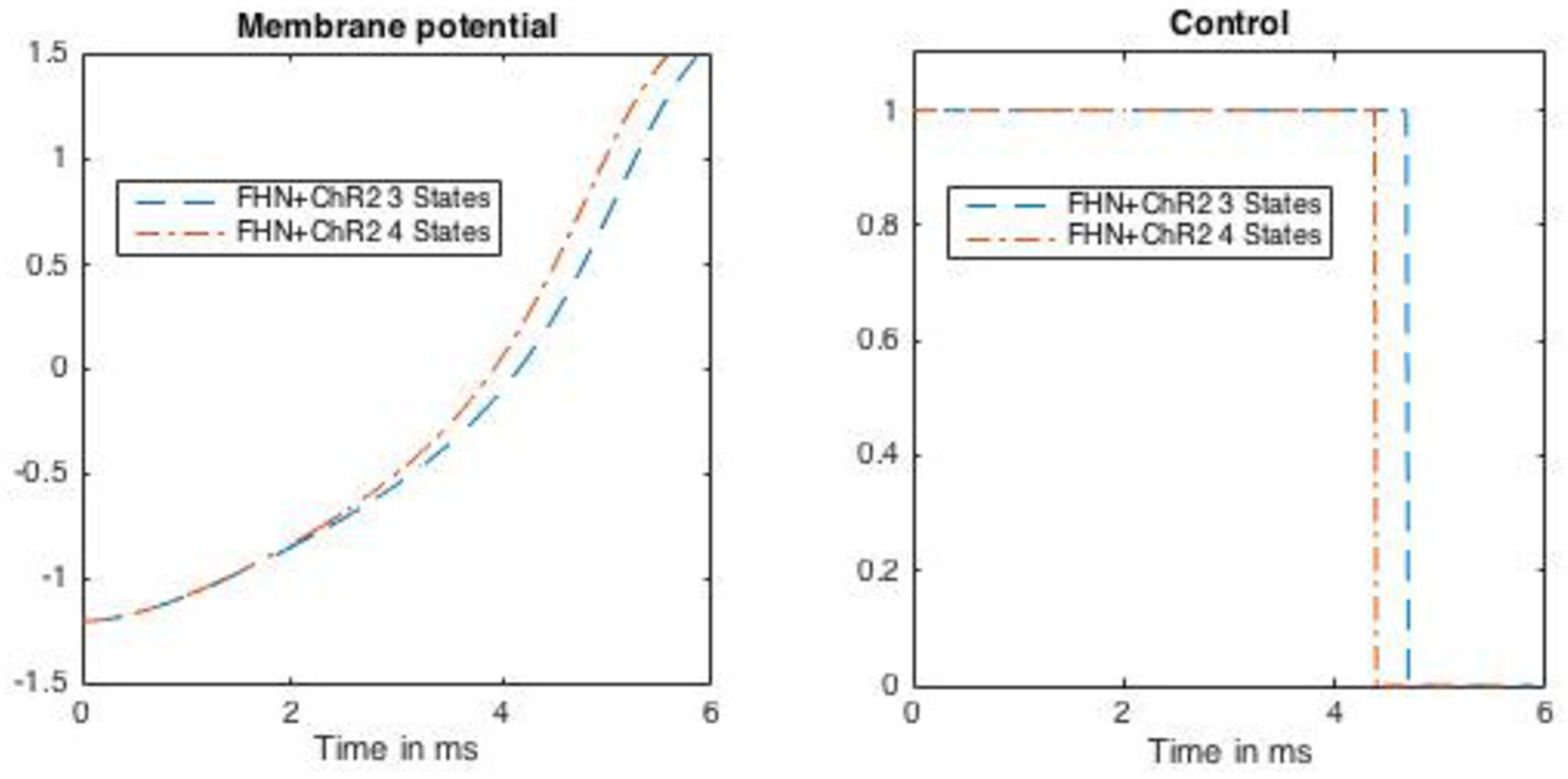} \\
   c) \includegraphics[scale=0.52]{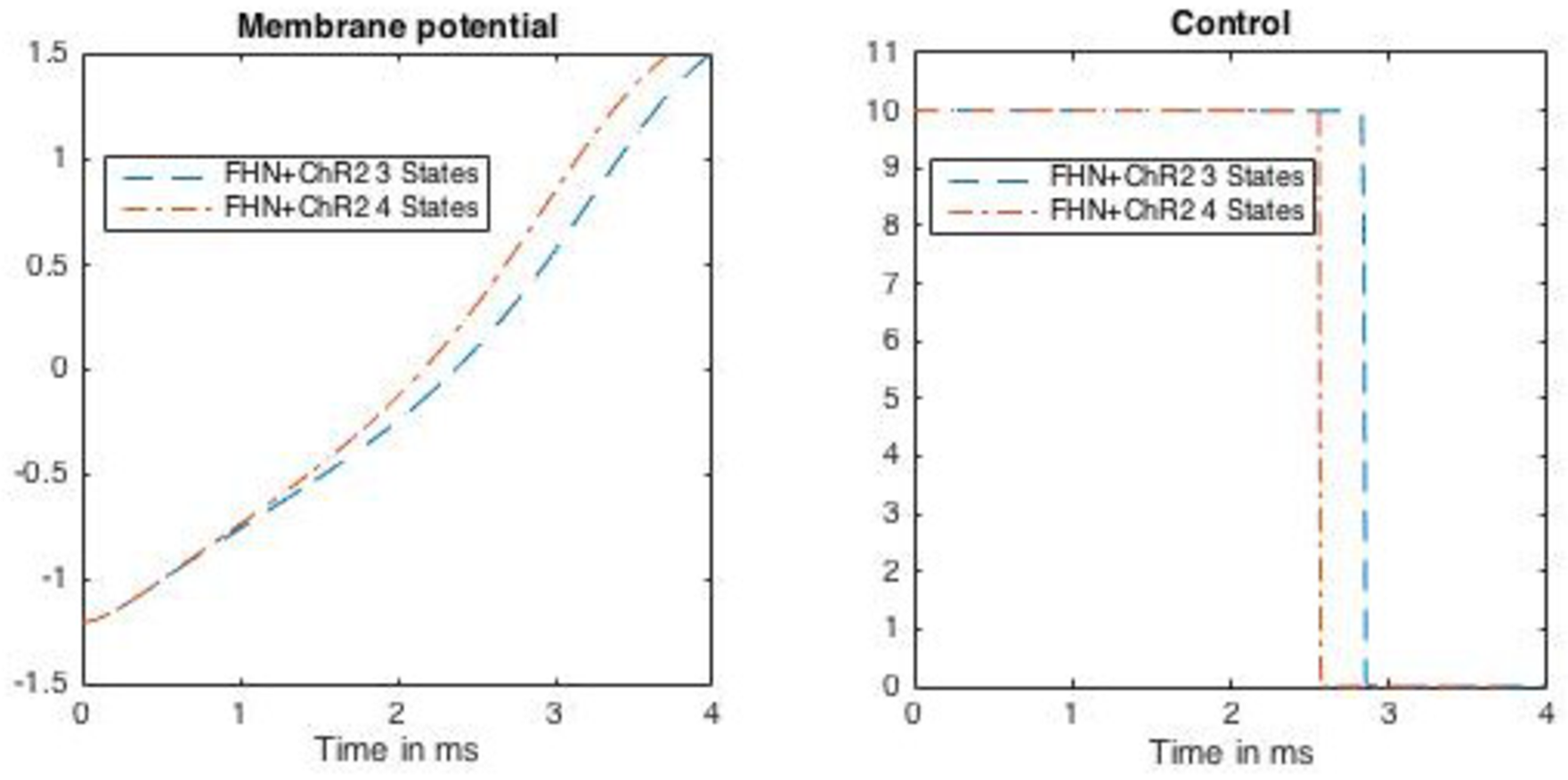} \\
   d) \includegraphics[scale=0.52]{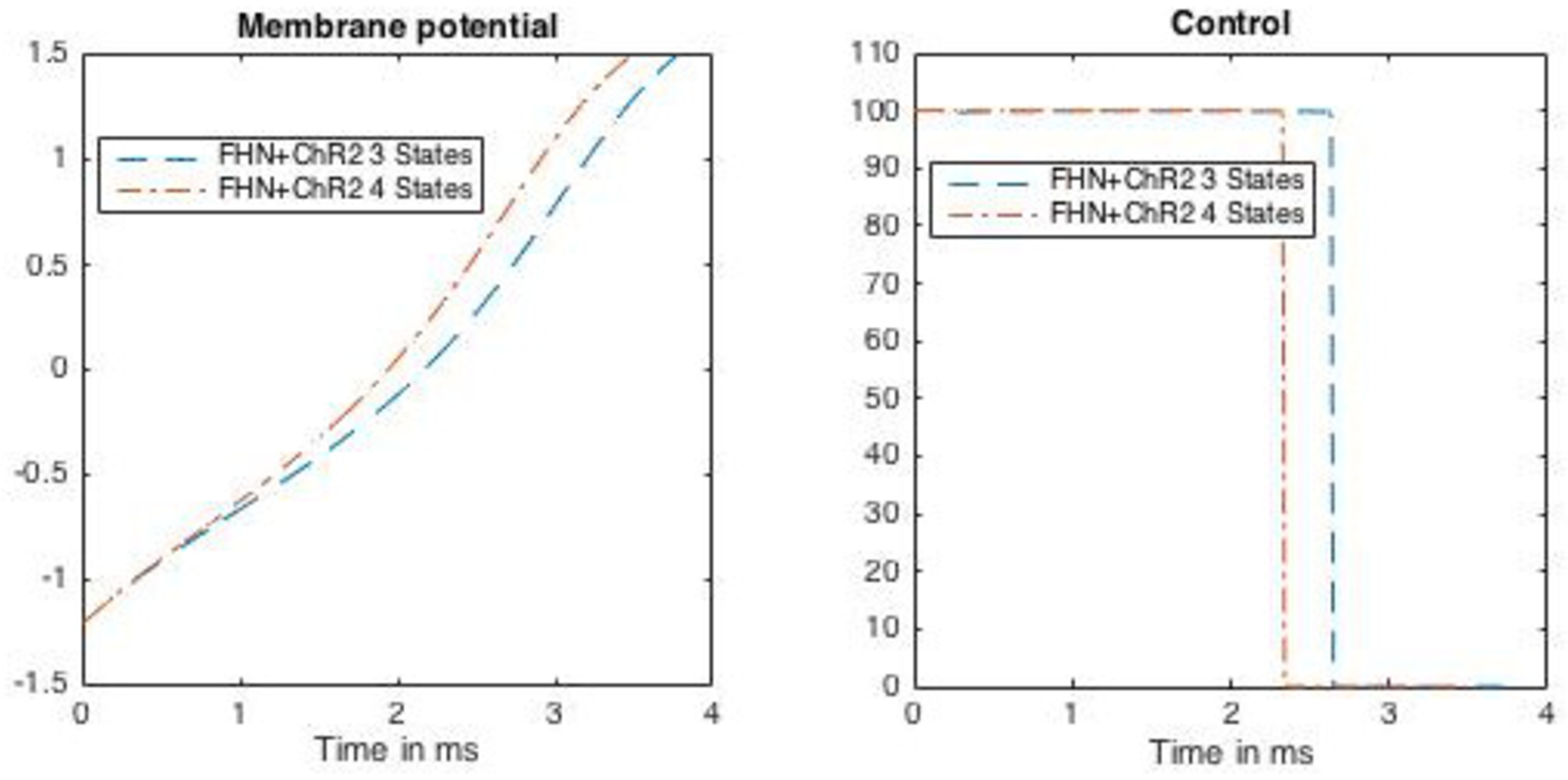}
\end{tabular}
\caption{Optimal trajectory and bang-bang optimal control for the FHN-ChR2-3-states and FHN-ChR2-4-states models with $u_{max}=$ a) $0.5$, b) $1$, c) $10$, d ) $100\,\mathrm{ms}^{-1}$.}
\label{FHN_ChR2_3and4States}
\end{figure}

\pagebreak

\subsection{The Morris-Lecar model}

The Morris-Lecar model is a reduced conductance-based model taking into account a $Ca^{2+}$ current for excitation and a $K^+$ current for recovery (\cite{MorrisLecar}). It comes from the experimental study of the oscillatory behavior of the membrane potential in the barnacle muscle. The original model is of dimension 3, but it is conveniently and commonly reduced to a two-dimensional model by invoking the fast dynamics of the $Ca^{2+}$ conductance in front of the other variables. This conductance is then replaced by its steady-state. 

\paragraph{The ChR2-3-states model}\hspace*{0pt}\\

\noindent The ChR2-3-states controlled Morris-Lecar model is given by

\begin{equation*}
(ML)\left\{
\begin{aligned}
\dot{\nu}(t) & = \frac{1}{C}\Big(g_K\omega(t)(V_K-\nu(t)) + g_{Ca}m_{\infty}(\nu(t))(V_{Ca}-\nu(t)) \\
&\qquad \qquad+g_{ChR2}o(t)(V_{ChR2}-\nu(t)) + g_L(V_L-\nu(t))\Big),\\ 
\dot{\omega}(t) & =  \alpha(\nu(t))(1-\omega(t)) - \beta(\nu(t))\omega(t),\\
\dot{o}(t) & = u(t)(1-o(t)-d(t)) - K_do(t),\\
\dot{d}(t) & = K_do(t) - K_rd(t),
\end{aligned}
\right.
\end{equation*}
with

\begin{align*}
m_{\infty}(\nu) &= \frac{1}{2}\left(1+\tanh\left(\frac{\nu-V_1}{V_2}\right)\right),\\
\alpha(\nu) &=\frac{1}{2}\phi\cosh\left(\frac{\nu-V_3}{2V_4}\right)\left(1+\tanh\left(\frac{\nu-V_3}{V_4}\right)\right),\\
\beta(\nu) &=\frac{1}{2}\phi\cosh\left(\frac{\nu-V_3}{2V_4}\right)\left(1-\tanh\left(\frac{\nu-V_3}{V_4}\right)\right),
\end{align*}
where $\nu$ is the membrane potential, $\omega$ is the probability of opening of a $K^+$ channel and $m_{\infty}(\nu)$ represent the steady state of the probability of opening of a $Ca^{2+}$ channel. The numerical constants of the model are given in Appendix \ref{AppendixML}. The adjoint equations read 

\begin{equation*}
(ML_{adj})\left\{
\begin{aligned}
\dot{p}_{\nu}(t) & =  \frac{1}{C}p_{\nu}(t)\Big(g_K\omega(t)+g_{Ca}m_{\infty}(\nu(t))+g_{ChR2}o(t)+g_L-g_{Ca}m'_{\infty}(\nu(t))\Big)\\
&\qquad -p_{\omega}(t)\Big(\alpha'(\nu(t))(1-\omega(t))-\beta'(\nu(t))\omega(t)\Big),\\ 
\dot{p}_{\omega}(t) & = -\frac{1}{C}g_K(V_K-\nu(t))p_{\nu}(t) + \Big(\alpha(\nu(t))+\beta(\nu(t))\Big)p_{\omega}(t),\\
\dot{p}_o(t) & = -\frac{1}{C}g_{ChR2}(V_{ChR2}-\nu(t))p_{\nu}(t) + (u(t)+K_d)p_o(t) -K_dp_d(t),\\
\dot{p}_d(t) &= u(t)p_o(t) + K_rp_d(t),
\end{aligned}
\right.
\end{equation*}
and the switching function is again $\varphi(t) = (1-o(t)-d(t))p_o(t)$. Proposition \ref{PropML} gives the same conclusion as Proposition \ref{PropFHN} for the ChR2-controlled Morris-Lecar model.

\begin{proposition}\label{PropML}
The optimal control $u^*:\R_+\rightarrow U$ for the minimal time control of the Morris-Lecar model is bang-bang and given by 

\begin{equation*}
u^*(t) = u_{max} \mathbf{1}_{p_o(t)>0}, \quad \forall t\in[0,t_f].
\end{equation*}
Furthermore, the optimal control begins with a bang arc of maximal value

\[
\exists t_1\in [0,t_f], u^*(t)=u_{max}, \forall t\in[0,t_1]. 
\]

\end{proposition}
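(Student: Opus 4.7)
The plan is to mirror closely the proof of Proposition \ref{PropFHN}, since Theorem \ref{ReductionTheorem} applies verbatim to conductance-based models and the Morris-Lecar system falls exactly in this framework. First I would apply the reduction theorem to pass from $(ML)$ to the two-dimensional affine system in $(\nu,\omega)$ with $o$ as the new control,
\begin{equation*}
(ML')\left\{
\begin{aligned}
\dot\nu & = \tfrac{1}{C}\Bigl(g_K\omega(V_K-\nu)+g_{Ca}m_\infty(\nu)(V_{Ca}-\nu)+g_L(V_L-\nu)\Bigr) + \tfrac{1}{C}g_{ChR2}o(V_{ChR2}-\nu),\\
\dot\omega & = \alpha(\nu)(1-\omega)-\beta(\nu)\omega,
\end{aligned}\right.
\end{equation*}
with drift $f_0$ and control vector field $f_1(\nu,\omega)=\tfrac{1}{C}g_{ChR2}(V_{ChR2}-\nu)\partial_\nu$. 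The reduced switching function is $\phi(t)=\langle p(t),f_1(\nu(t),\omega(t))\rangle=\tfrac{1}{C}g_{ChR2}(V_{ChR2}-\nu(t))p_\nu(t)$. By Theorem \ref{ReductionTheorem}, showing that this reduced system has no optimal singular extremals is enough.

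The heart of the argument is to assume that $\phi$ vanishes on an open subinterval $I\subset[0,t_f]$ and derive a contradiction. By continuity, on $I$ one has either $\nu\equiv V_{ChR2}$ or $p_\nu\equiv 0$. I would rule out the first case as in the FHN proof: if $\nu\equiv V_{ChR2}$ on $I$, then the ChR2 contribution to $\dot\nu$ vanishes identically, so the remaining ionic balance
\begin{equation*}
g_K\omega(t)(V_K-V_{ChR2})+g_{Ca}m_\infty(V_{ChR2})(V_{Ca}-V_{ChR2})+g_L(V_L-V_{ChR2})=0
\end{equation*}
forces $\omega$ to be the constant $\omega^\star$ determined by this algebraic relation. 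Then $\dot\omega\equiv 0$ would additionally demand $\omega^\star=\alpha(V_{ChR2})/(\alpha(V_{ChR2})+\beta(V_{ChR2}))$, which is a second, generically incompatible condition; this is precisely the statement that $(V_{ChR2},\omega)$ is not an equilibrium of the uncontrolled system for any $\omega\in\R$, which I would check against the numerical values in Appendix \ref{AppendixML}. The second case, $p_\nu\equiv 0$ on $I$, implies $\dot p_\nu\equiv 0$; feeding this into $(ML_{adj})$ (reduced to the two-dimensional adjoint) yields $p_\omega(t)\alpha'(\nu(t))(1-\omega(t))-p_\omega(t)\beta'(\nu(t))\omega(t)=0$ together with the evolution of $p_\omega$, from which one extracts $p_\omega\equiv 0$ on $I$. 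Combined with $p_\nu\equiv 0$, this contradicts the non-triviality of $(p,p^0)$ afforded by the Pontryagin Maximum Principle (recalling, as noted in Section \ref{PMPSection}, that $p\equiv 0$ forces $p^0=0$).

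Having ruled out singular arcs, I would conclude that the optimal control is bang-bang. The full switching function for the original four-dimensional system is $\varphi(t)=(1-o(t)-d(t))p_o(t)$; since the dynamics of $(o,d)$ starting from $(0,0)$ keep $o(t)+d(t)<1$ for all $t\in[0,t_f]$ (a standard invariance check on the two-state Markovian flow (\ref{ChR2_3States_Dynamics})), the sign of $\varphi$ is the sign of $p_o$, giving the announced expression. Finally, for the initial maximal arc, I would argue by contradiction exactly as in Proposition \ref{PropFHN}: if $u^\ast(0)=0$, then by continuity of the bang-bang control there exists $\tau>0$ on which $u^\ast\equiv 0$, so the system remains at the resting state $x_{eq}$ on $[0,\tau]$; this strictly delays reaching $M_s$ and contradicts time optimality. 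The main obstacle I foresee is the verification that $(V_{ChR2},\omega)$ is never an equilibrium of the uncontrolled Morris-Lecar dynamics, which relies on the concrete parameter values, but once this is checked the rest of the argument is structurally identical to the FitzHugh-Nagumo case.
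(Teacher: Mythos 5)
Your overall strategy (reduction via Theorem \ref{ReductionTheorem}, ruling out singular arcs for the planar system, then the bang-bang formula and the initial maximal arc) is the paper's, and your treatment of the first case ($\nu\equiv V_{ChR2}$) and of the concluding steps is sound. The gap is in the second case. From $p_\nu\equiv 0$ and hence $\dot p_\nu\equiv 0$ you correctly obtain
\begin{equation*}
p_\omega(t)\bigl(\alpha'(\nu(t))(1-\omega(t))-\beta'(\nu(t))\omega(t)\bigr)=0 \quad\text{on } I,
\end{equation*}
but you then assert that this, ``together with the evolution of $p_\omega$,'' yields $p_\omega\equiv 0$. It does not. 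Unlike the FitzHugh--Nagumo case, where the coefficient of $p_w$ in $\dot p_v$ is the nonzero constant $c$, here the coefficient multiplying $p_\omega$ is state-dependent, so the product can vanish because the bracket vanishes along the trajectory rather than because $p_\omega$ does. Indeed, with $p_\nu\equiv 0$ the remaining adjoint equation reduces to $\dot p_\omega=(\alpha(\nu)+\beta(\nu))p_\omega$, so if $p_\omega$ is nonzero at one point of $I$ it is nonzero on all of $I$; the branch $\alpha'(\nu)(1-\omega)-\beta'(\nu)\omega\equiv 0$ on $I$ must therefore be excluded by a separate argument. This is exactly the extra work the paper does: it introduces the set $A=\left\{(\nu,\omega)\in\R^2 \,\middle|\, \omega=\frac{\alpha'(\nu)}{\alpha'(\nu)+\beta'(\nu)},\ \nu\neq V_3\right\}$ and shows that every point of $A$ has $\omega\in\,]-\infty,0[\,\cup\,]1,+\infty[$, whereas any trajectory of $(ML')$ keeps its second component in $[0,1]$, so the singular arc cannot lie in $A$. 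Without this step (or some substitute for it), the contradiction with the nontriviality of $p$ is never reached, and this is precisely where the Morris--Lecar argument genuinely departs from the FitzHugh--Nagumo proof you are mirroring.
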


\begin{proof}
We apply the result of Theorem \ref{ReductionTheorem} and study the existence of singular extremals for the following reduced system

\begin{equation*}
(ML')\left\{
\begin{aligned}
\dot{\nu}(t) & = \frac{1}{C}\Big(g_K\omega(t)(V_K-\nu(t)) + g_{Ca}m_{\infty}(\nu(t))(V_{Ca}-\nu(t)) \\
&\qquad \qquad +g_{ChR2}u(t)(V_{ChR2}-\nu(t)) + g_L(V_L-\nu(t))\Big),\\ 
\dot{\omega}(t) & =  \alpha(\nu(t))(1-\omega(t)) - \beta(\nu(t))\omega(t),
\end{aligned}
\right.
\end{equation*}
The adjoint equations for this system are 

\begin{equation*}
(ML'_{adj})\left\{
\begin{aligned}
\dot{p}_{\nu}(t) & =  \frac{1}{C}p_{\nu}(t)\Big(g_K\omega(t)+g_{Ca}m_{\infty}(\nu(t))+g_{ChR2}u(t)+g_L-g_{Ca}m'_{\infty}(\nu(t))\Big)\\
&\qquad -p_{\omega}(t)\Big(\alpha'(\nu(t))(1-\omega(t))-\beta'(\nu(t))\omega(t)\Big),\\ 
\dot{p}_{\omega}(t) & = -\frac{1}{C}g_K(V_K-\nu(t))p_{\nu}(t) + (\alpha(\nu(t))+\beta(\nu(t)))p_{\omega}(t),\\
\end{aligned}
\right.
\end{equation*}
The vector fields defining the affine system ($ML'$) are

\begin{equation*}
\begin{aligned}
f_0(\nu,\omega) & =  \frac{1}{C}\Big(g_K\omega(V_K-\nu) + g_{Ca}m_{\infty}(\nu)(V_{Ca}-\nu) + g_L(V_L-\nu)\Big)\partial_{\nu}\\
&\qquad \qquad +  \Big(\alpha(\nu)(1-\omega) - \beta(\nu)\omega\Big)\partial_{\omega}\\ 
f_1(\nu,\omega) & = \frac{1}{C}g_{ChR2}(V_{ChR2}-v)\partial_{\nu}\\
\end{aligned}
\end{equation*}
For the reduced system, the switching function is given by 

\begin{equation*}
\phi(t) = 	\langle p(t), f_1(\nu(t),\omega(t))\rangle = \frac{1}{C}g_{ChR2}(V_{ChR2}-\nu(t))p_{\nu}(t).
\end{equation*}

\paragraph{Investigation of singular trajectories}\hspace*{0pt}\\

Assume that there exists an open interval $I$ along which the switching function vanishes. Then for all $t\in I$,
\begin{equation*}
\langle p(t), f_1(v(t),w(t))\rangle = 0.
\end{equation*}
As for the FitzHugh-Nagumo model, there is no $\omega\in[0,1]$ such that $(V_{ChR2},\omega)$ is an equilibrium point of the uncontrolled Morris-Lecar model, so that necessarily $p_{\omega}$ vanishes on $I$. From ($ML'$) we deduce that for all $t\in I$,

\[
p_{\omega}(t)\Big(\alpha'(\nu(t))(1-\omega(t))-\beta'(\nu(t))\omega(t)\Big) = 0,
\]
and since $p$ cannot vanish on $I$ then  

\[
\alpha'(\nu(t))(1-\omega(t))-\beta'(\nu(t))\omega(t) = 0.
\]
This means that the singular extremal is localized in the domain $A$ of $\R^2$ given by 

\begin{equation*}
A:= \{(\nu,\omega)\in\R^2 | \alpha'(\nu)(1-\omega)-\beta'(\nu)\omega = 0 \}.
\end{equation*}
We can rewrite it in a more convenient way

\begin{equation*}
A= \left\{(\nu,\omega)\in\R^2 | \omega = \frac{\alpha'(\nu)}{\alpha'(\nu)+\beta'(\nu)} \text{ and } \nu\neq V_3 \right\}.
\end{equation*}
Domain $A$ is represented on Figure \ref{SingularExtremalFigure} below and it is easy to see that any trajectory of the dynamical system ($ML'$) has an empty intersection with $A$ because for all $(\nu,\omega)\in A$, $\omega \in ]-\infty,0[\cup]1,+\infty[$, whereas the second component of the trajectory always stays in $[0,1]$.

\begin{figure}[!ht]
\begin{center}
\includegraphics[scale=0.5]{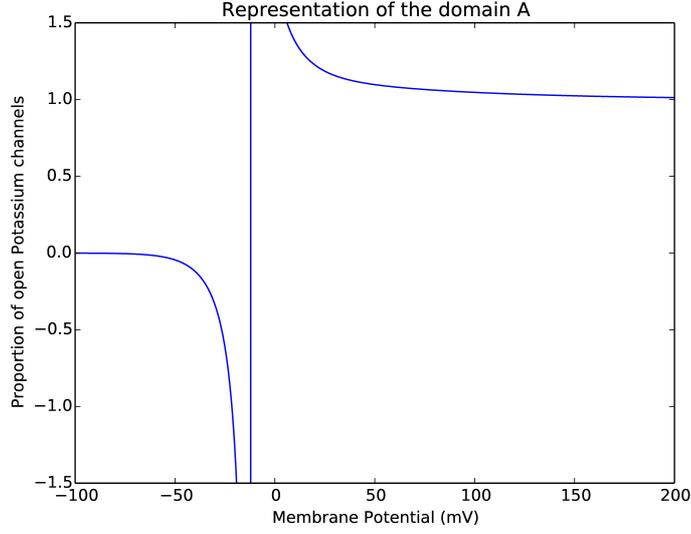}
\end{center}
\caption{Representation of the manifold in which a singular trajectory must evolve.}
\label{SingularExtremalFigure}
\end{figure}
The end of the proof is similar to the proof of Proposition \ref{PropFHN}.

\end{proof}

\begin{remark}

Let us briefly show how the investigation of singular trajectories for the complete system before reduction is much more difficult. To do so, consider the controlled Morris-Lecar model ($ML$) with its system of adjoint equations ($ML_{adj}$) and the vector fields defined for $x=(\nu,\omega,o,d)\in\R^4$ by

\begin{align*}
F_0(x) &:= \frac{1}{C}\Big(g_K\omega(V_K-\nu) + g_{Ca}m_{\infty}(\nu)(V_{Ca}-\nu) + og_{ChR2}(V_{ChR2}-\nu) + g_L(V_L-\nu)\Big)\partial_{\nu} \\
&\qquad \qquad \Big(\alpha(\nu)(1-\omega) - \beta(\nu)\omega\Big)\partial_{\omega} -K_do\partial_o + (K_do-K_rd)\partial_d,
\end{align*}

and 

\begin{equation*}
F_1(x)  =(1-o-d)\partial_{o}.
\end{equation*}

\begin{proposition}\label{PropositionReductionBenefit}
Let ($x,p,u$) be a singular extremal of $(ML)-(ML_{adj})$ on an open interval $I$ of $[0,t_f]$. Then, without any further assumption, 
\begin{equation*}
\langle p(t),\mathrm{ad}^k_{F_0}F_1(x(t))\rangle \equiv 0,\quad \langle p(t),\mathrm{ad}^k_{F_1}F_0(x(t)) \rangle \equiv 0, \quad \langle p(t),[F_1,\mathrm{ad}^2_{F_0}F_1](x(t)) \rangle \equiv 0,
\end{equation*}
on $I$ for all $k\in\{1,2,3\}$.
\end{proposition}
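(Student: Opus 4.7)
The plan is to iteratively differentiate the switching function $\varphi(t) = \langle p(t), F_1(x(t))\rangle$ along the singular arc $I$, exploiting the very special form $F_1 = (1-o-d)\,\partial_o$. First I would rerun the continuity argument from the proof of Lemma \ref{GohLemma}: since $\varphi = (1-o-d)\,p_o \equiv 0$ on $I$ and the factor $1-o-d$ cannot vanish identically (the $(o,d)$-dynamics would then be inconsistent), we obtain $p_o\equiv 0$ on $I$. Differentiating this identity and substituting the $\dot p_o$-equation of $(ML_{adj})$ yields the companion relation $\tfrac{g_{ChR2}}{C}(V_{ChR2}-\nu)\,p_\nu + K_d\, p_d \equiv 0$ on $I$. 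Together with $p_o\equiv 0$, this is the algebraic backbone that makes every bracket pairing in the statement collapse.

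The $k=1$ identities come immediately from (\ref{derivative}): since $[F_1,F_1]=0$, differentiating $\varphi\equiv 0$ gives $\dot\varphi = \langle p,\mathrm{ad}_{F_0}F_1\rangle \equiv 0$, and antisymmetry of the Lie bracket yields $\langle p,\mathrm{ad}_{F_1}F_0\rangle \equiv 0$. The key structural observation driving everything else is that, because $F_1$ has only a $\partial_o$-component with coefficient $1-o-d$, the operator $[F_1,\cdot\,]$ acts on any vector field $h=(h_\nu,h_\omega,h_o,h_d)$ by
\[
[F_1,h] = (1-o-d)\,(\partial_o h) + (h_o+h_d)\,\partial_o,
\]
where $\partial_o h$ denotes the vector of componentwise partial derivatives in $o$, so the only ``new'' direction $[F_1,\cdot\,]$ ever introduces is $\partial_o$, which $p$ does not see. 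Applying this decomposition to $h = \mathrm{ad}_{F_0}F_1$ (after computing $\mathrm{ad}_{F_0}F_1$ in coordinates) shows that $[F_1,\mathrm{ad}_{F_0}F_1]$ is a scalar multiple of $\mathrm{ad}_{F_0}F_1$ plus a pure $\partial_o$-term, so $\langle p,[F_1,\mathrm{ad}_{F_0}F_1]\rangle\equiv 0$. Then $\ddot\varphi = \langle p,\mathrm{ad}^2_{F_0}F_1\rangle + u\langle p,[F_1,\mathrm{ad}_{F_0}F_1]\rangle \equiv 0$ forces $\langle p,\mathrm{ad}^2_{F_0}F_1\rangle\equiv 0$. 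The analogous decomposition of $\mathrm{ad}^2_{F_1}F_0=[F_1,[F_1,F_0]]$ as $[F_0,F_1]$ plus a $\partial_o$-term gives $\langle p,\mathrm{ad}^2_{F_1}F_0\rangle\equiv 0$.

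For $k=3$ and the bonus identity $\langle p,[F_1,\mathrm{ad}^2_{F_0}F_1]\rangle\equiv 0$, I would use the same mechanism one level deeper. First, applying the $[F_1,\cdot\,]$-formula above to $\mathrm{ad}^2_{F_0}F_1$ shows that $[F_1,\mathrm{ad}^2_{F_0}F_1]$ splits as a combination of $F_1$, $\mathrm{ad}_{F_0}F_1$, $\mathrm{ad}^2_{F_0}F_1$ and a pure $\partial_o$-term, so its pairing with $p$ vanishes thanks to the already-proved identities and $p_o\equiv 0$. Differentiating $\ddot\varphi\equiv 0$ once more and expanding via (\ref{derivative}) then reduces, after eliminating all the mixed brackets that are now known to vanish, to $\langle p,\mathrm{ad}^3_{F_0}F_1\rangle\equiv 0$. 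The identity for $\mathrm{ad}^3_{F_1}F_0$ follows by repeating the decomposition argument starting from $[F_1,F_0]$, or by a Jacobi-identity rearrangement using the previously obtained relations.

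The main obstacle is purely bookkeeping: $\mathrm{ad}^2_{F_0}F_1$ and especially $\mathrm{ad}^3_{F_0}F_1$ accumulate long contributions from the Morris--Lecar nonlinearities $m_\infty'$, $\alpha'$, $\beta'$ and their iterates, making the explicit coordinate expressions rapidly unwieldy. What rescues the argument is the structural fact that every bracket $[F_1,h]$ introduces its ``new'' content only along $\partial_o$, the one direction $p$ annihilates on the singular arc. This is precisely the phenomenon that the Goh reduction of Section \ref{GohSection} captures in a single step, and Proposition \ref{PropositionReductionBenefit} is best read as a quantitative measure of how much is gained by performing that reduction rather than grinding through the bracket cascade by hand.
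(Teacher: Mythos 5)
Your proof is correct and rests on the same backbone as the paper's: from $\langle p,F_1\rangle\equiv 0$ and its first derivative one extracts the two relations $p_o\equiv 0$ and $\frac{1}{C}g_{ChR2}(V_{ChR2}-\nu)p_\nu+K_dp_d\equiv 0$ (equation (\ref{singularML})), and the identity $\mathrm{ad}^3_{F_1}F_0=-[F_0,F_1]$, which your $[F_1,\cdot\,]$ decomposition recovers, is exactly the one the paper singles out. Where you go beyond the paper is in making the mechanism explicit: the paper simply asserts that ``the rest of the equalities are all given by (\ref{singularML})'', whereas your formula $[F_1,h]=(1-o-d)\,\partial_o h+(h_o+h_d)\,\partial_o$ shows cleanly why every bracket of the form $[F_1,\cdot\,]$ pairs to zero against $p$ on the singular arc (each such bracket is a combination of previously handled brackets plus a pure $\partial_o$-term, which $p$ annihilates). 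More importantly, you correctly observe that $\langle p,\mathrm{ad}^2_{F_0}F_1\rangle\equiv 0$ and $\langle p,\mathrm{ad}^3_{F_0}F_1\rangle\equiv 0$ are \emph{not} pointwise algebraic consequences of (\ref{singularML}) alone --- those pairings involve $p_\omega$ and the Morris--Lecar nonlinearities $m_\infty'$, $\alpha'$, $\beta'$ --- but follow from differentiating the switching function once the corresponding $u$-coefficients $\langle p,[F_1,\mathrm{ad}_{F_0}F_1]\rangle$ and $\langle p,[F_1,\mathrm{ad}^2_{F_0}F_1]\rangle$ are known to vanish. This is a more accurate account of where the higher-order identities come from than the paper's one-line justification, and it buys a proof that can be checked without computing $\mathrm{ad}^2_{F_0}F_1$ and $\mathrm{ad}^3_{F_0}F_1$ in full coordinates.
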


Keeping in mind that we already proved that there is no optimal singular control, if we consider the system before reduction, Proposition \ref{PropositionReductionBenefit} means that we need to consider the following system of equations to rule out optimal singular extremals

\begin{align*}
\langle p,[F_0,\mathrm{ad}^3_{F_1}F_0]\rangle + u\langle p,\mathrm{ad}^4_{F_1}F_0 \rangle & \equiv 0,\\
\langle p,[F_0,[F_1,\mathrm{ad}^2_{F_0}F_1]]\rangle + u\langle p,\mathrm{ad}^2_{F_1}(\mathrm{ad}^2_{F_0}F_1) \rangle & \equiv 0,\\
\langle p,\mathrm{ad}^4_{F_0}F_1\rangle + u\langle p,[F_1,\mathrm{ad}^3_{F_0}F_1] \rangle & \equiv 0,
\end{align*}
on $I$.

\begin{proof}[Proof of Proposition \ref{PropositionReductionBenefit}]

Let $t\in I$. From the equalities $\langle p(t),F_1(x(t)) \rangle = 0$ and \\ $\langle p(t),[F_0,F_1](x(t)) \rangle = 0$ we infer that 

\begin{equation}\label{singularML}
\left\{
\begin{aligned}
&p_o(t) = 0,\\
&\frac{1}{C}g_{ChR2}(V_{ChR2}-\nu(t))p_v(t) + K_dp_d(t) = 0.
\end{aligned}
\right.
\end{equation}

It can also be proved that $\mathrm{ad}^3_{F_1}F_0 = -[F_0,F_1]$. The rest of the equalities are all given by (\ref{singularML}).

\end{proof}

\end{remark}

For this model, we implemented the direct method with the numerical values of Appendices \ref{AppendixML} and \ref{AppendixChR2_3States}. The targeted action potential has been fixed to $30$mV.

The optimal control for the ChR2-3-states model is bang-bang and begins with a maximal arc. For the numerical values of Appendices \ref{AppendixML} and \ref{AppendixChR2_3States}, it displays three switching times. We represent on Figure \ref{MLCHR2_3States} the optimal trajectory of the membrane potential and the optimal control, for the physiological value of the maximal value control, computed in Appendix \ref{AppendixChR2_3States}, and also the trajectory obtained under constant maximal stimulation, just to observe that the optimal control obtained is indeed better than the constant maximal stimulation. If the difference is very small, of the order of a millisecond, the counter-intuitive stimulation still outperforms the constant maximal stimulation. In order to show that the difference between the counter-intuitive optimal stimulation and the constant maximal stimulation can be huge, we implement the direct method on a system with different numerical values for the constants of the Morris-Lecar model (the Type I neuron of \cite[Table 1]{Longtin}, see Table \ref{LongtinChR2}, in Appendix \ref{AppendixChR2_3States} ), and values for the ChR2-3-states model remaining unchanged, except for $V_{ChR2}=0.1$mV. The result is striking, the constant stimulation even fails to trigger a spike while the stimulation with three switching times makes the neuron fire (see Figure \ref{MLCHR2_3StatesNoSpike}). It is important to note that the presence of three switching times is not an intrinsic characteristic of the Morris-Lecar model itself. Indeed, we can find optimal controls with only two switches if we change the value for the equilibrium potential of the ChR2, keeping all the other constants of the model unchanged (Figure \ref{MLCHR2_3StatesTwoSwitches}). 
\bigskip
\begin{figure}[!ht]
\begin{center}
\includegraphics[scale=0.6]{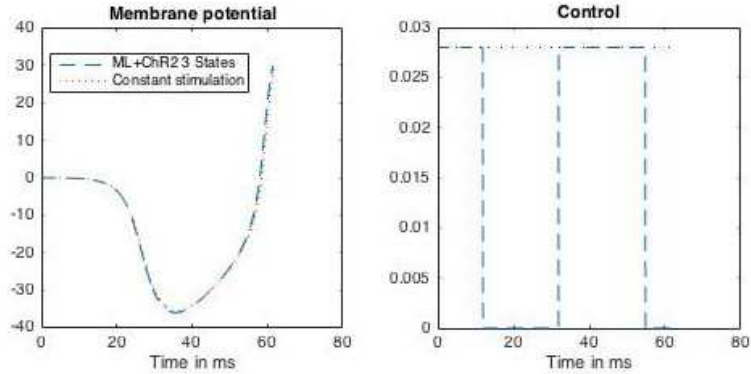}
\end{center}
\caption{Optimal trajectory and bang-bang optimal control for the ML-ChR2-3-states model.}
\label{MLCHR2_3States}
\end{figure}

\clearpage

\begin{figure}[!ht]
\begin{center}
\includegraphics[scale=0.55]{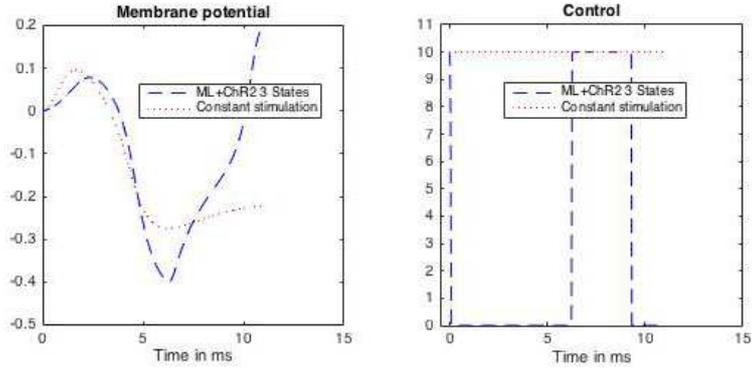}
\end{center}
\caption{Optimal trajectory and bang-bang optimal control for the ML-ChR2-3-states model with numerical values of \cite[Table 1]{Longtin}. The constant stimulation fails to trigger a spike.}
\label{MLCHR2_3StatesNoSpike}
\end{figure}

\begin{figure}[!ht]
\begin{center}
\includegraphics[scale=0.6]{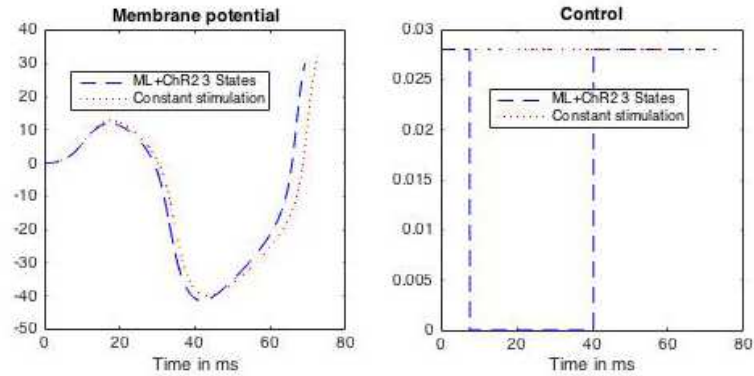}
\end{center}
\caption{Optimal trajectory and bang-bang optimal control for the ML-ChR2-3-states model with numerical values of Appendix \ref{AppendixML} and $VChR2=20$mV. The optimal control has only two switches.}
\label{MLCHR2_3StatesTwoSwitches}
\end{figure}

\paragraph{The ChR2-4-states model}\hspace*{0pt}\\

The shape of the optimal trajectory and control of the ChR2-4-states model correspond to the one of the ChR2-3-states model. Nevertheless, for small values of $u_{max}$, including the physiological value computed in Appendix \ref{AppendixChR2_3States}, the ChR2-3-states model outperforms the ChR2-4-states model whereas for larger values of $u_{max}$, the opposite happens (Figure \ref{ML_ChR2_3and4States}). The threshold where this phenomenon happens is around the value $u_{max} = 0.1$. Furthermore, the difference grows larger when $u_{max}$ increases. This is an usual behavior that suggests that the Morris-Lecar is less robust than the FitzHugh-Nagumo model, or the Hodgkin-Huxely models, as we are going to see.

\clearpage

\begin{figure}[!ht]
\centering
\begin{tabular}{c}
   a) \includegraphics[scale=0.52]{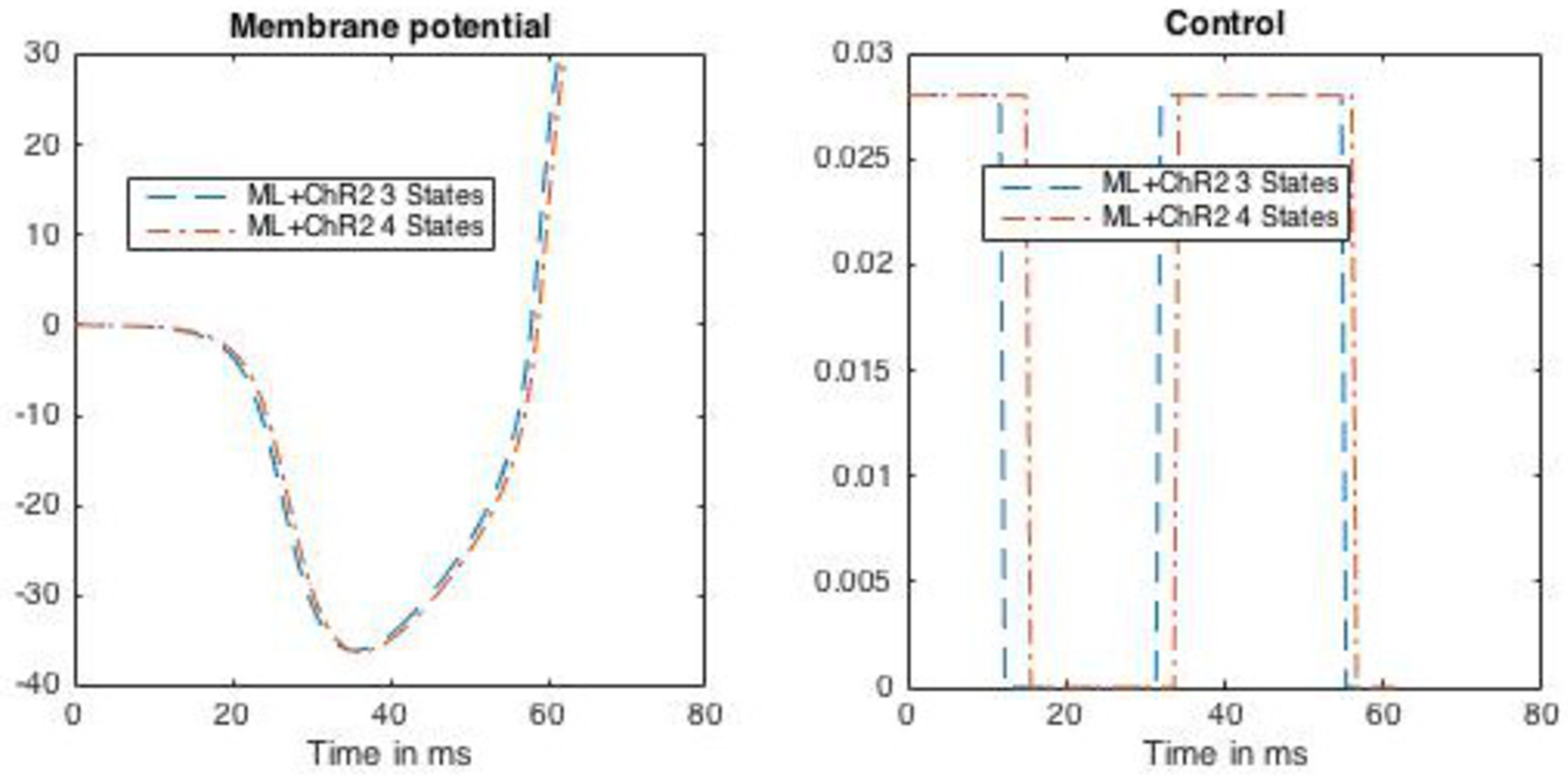} \\
   b) \includegraphics[scale=0.52]{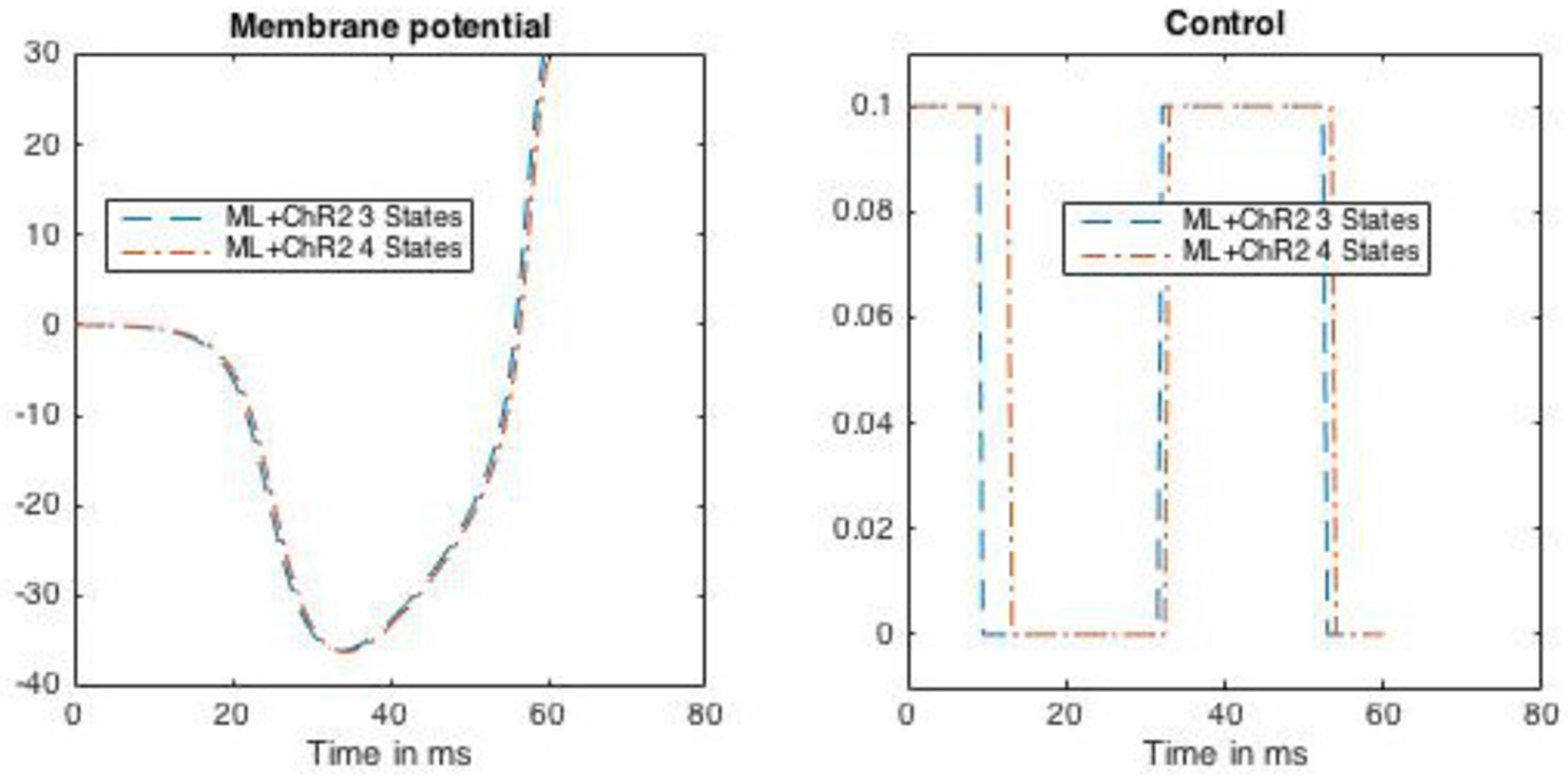} \\
   c) \includegraphics[scale=0.52]{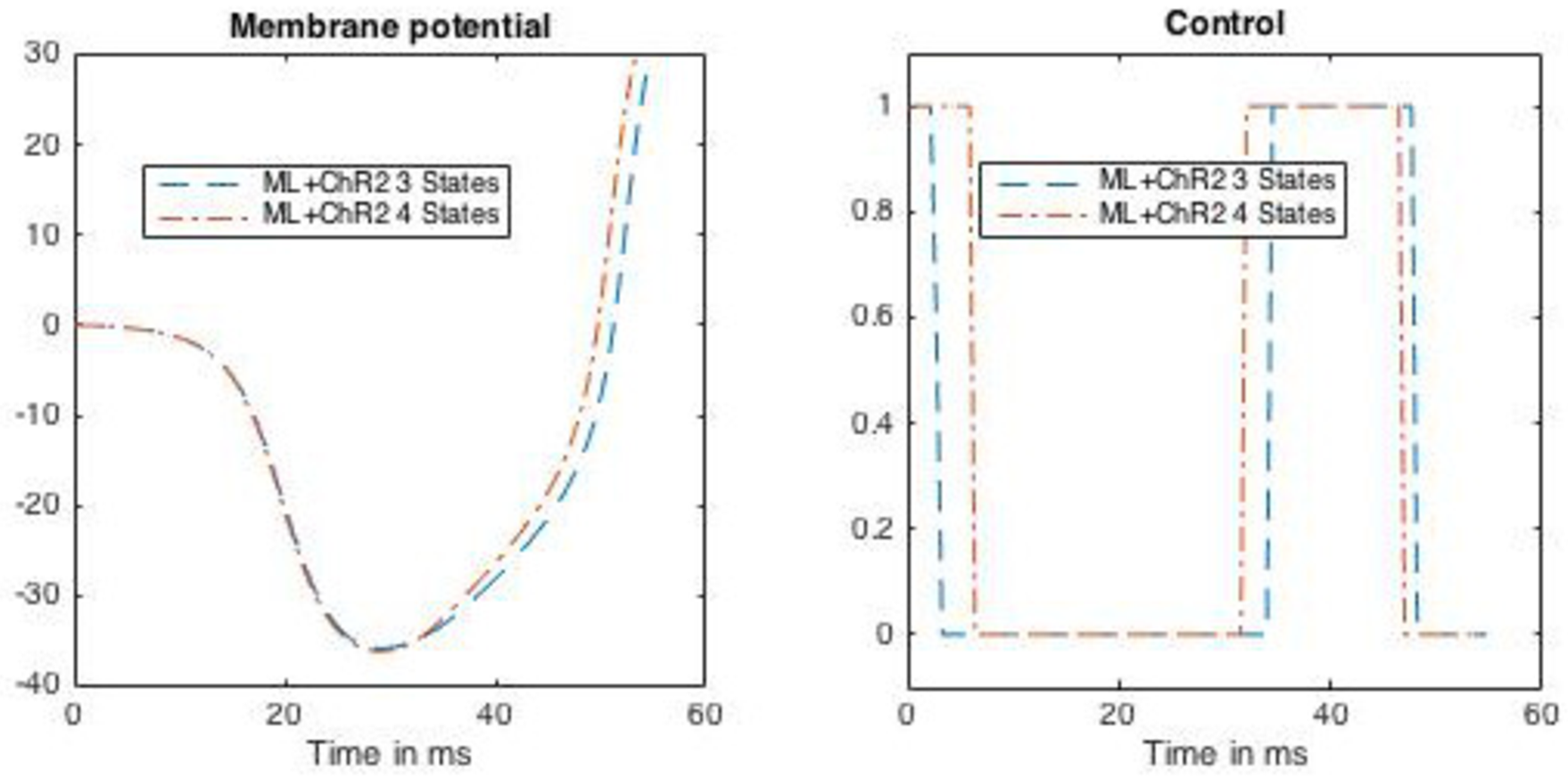} \\
   d)  \includegraphics[scale=0.52]{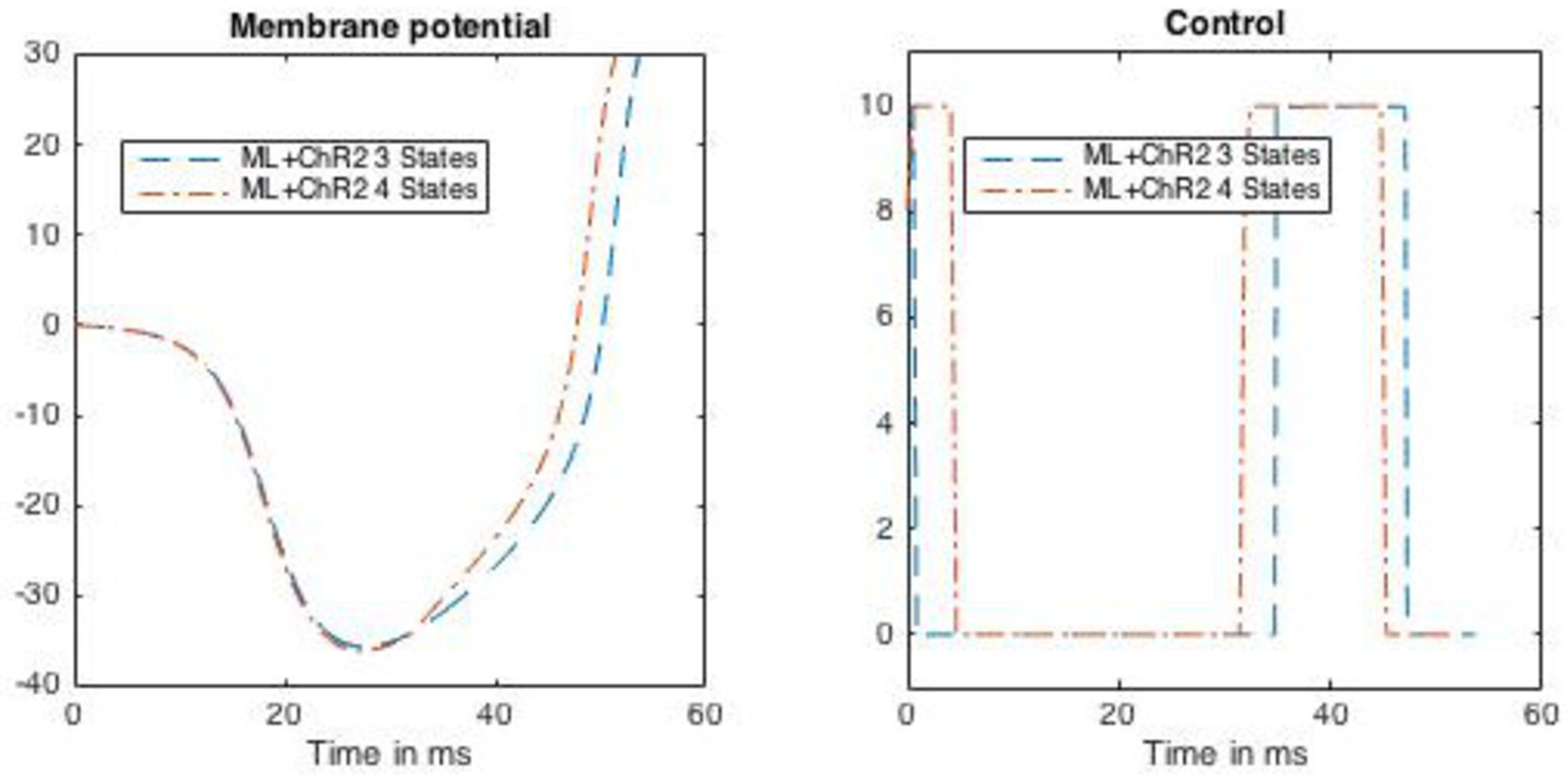}
\end{tabular}

\caption{Optimal trajectory and bang-bang optimal control for the ML-ChR2-3-states and ML-ChR2-4-states models with $u_{max}=$ a) $0.028$, b) $0.1$, c) $1$, d) $10\,\mathrm{ms}^{-1}$.}
\label{ML_ChR2_3and4States}
\end{figure}

%

\subsection{The reduced Hodgkin-Huxley model}\label{subsecReducedHH}

Similarly to the reduction of the initial Morris-Lecar model, there exists a popular reduction of the Hodgkin-Huxley model to a 2-dimensional conductance-based model. This reduction is based on the observation that, on the one hand, the variable $m$ is much faster than the other two gating variables $n$ and $h$, and on the other hand, the variable $h$ is almost a linear function of the variable $n$ ($h\simeq a+bn$). These observations lead to a new system of equations derived from $(HH)$ by setting the variable $m$ in its stationary state $m(t)=m_{\infty}(t)$ and taking the variable $h$ as above.

\begin{equation*}
(HH_{2D})\left\{
\begin{aligned}
C\frac{\mathrm{d}V}{\mathrm{d}t} & = g_Kn^4(t)(V_K-V(t)) + g_{Na}m_{\infty}^3(V)(a+bn(t))(V_{Na}-V(t))\\
&\quad + g_L(V_L-V(t)),\\ 
\frac{\mathrm{d}n}{\mathrm{d}t} & =  \alpha_n(V(t))(1-n(t)) - \beta_n(V(t))n(t),
\end{aligned}
\right.
\end{equation*}

with $m_{\infty}(v) = \frac{\alpha_m(v)}{\alpha_m(v)+\beta_m(v)}$. It is important to note that, although the time constants of the ion channels have been mathematically investigated (see for example \cite{RubinWechselberger}), the approximation of the variable $h$ is purely based on observation, and not on a rigorous mathematical reduction. Nevertheless, if the linear approximation seems questionable when the membrane potential is held fixed (Figure \ref{hLinearFixVFigure}), it becomes quite remarkable when the whole system (HH) is considered as in Figure \ref{hLinearPeriodicFigure} for a periodic behavior and Figure \ref{hLinearTransitoryFigure} for a transitory behavior, with different initial membrane potentials $V_0$. The different behaviors are obtained by tuning the external current $I_{ext}$ that is applied.

\begin{figure}[!ht]
\begin{center}
\includegraphics[width=10cm]{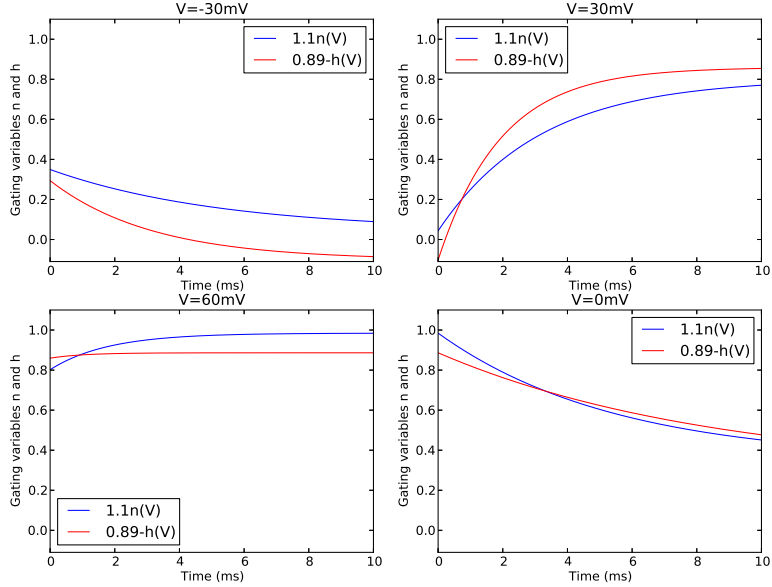}
\end{center}
\caption{Linear approximation of the variable $h$ when the membrane potential is held fixed at $-30$, $0$, $30$ and $60$mV.}
\label{hLinearFixVFigure}
\end{figure}

\begin{figure}[!ht]
\begin{center}
\includegraphics[width=10cm]{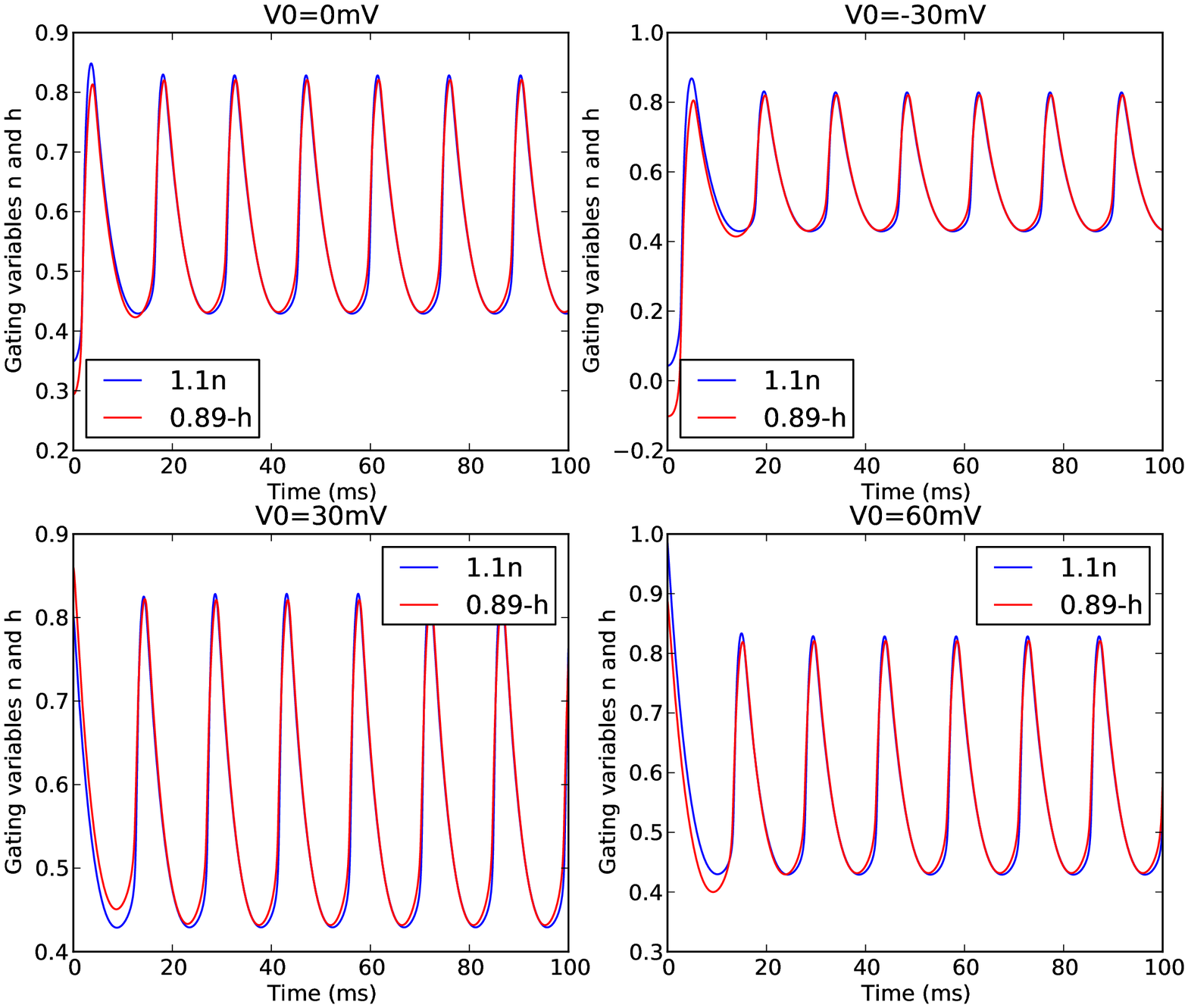}
\end{center}
\caption{Linear approximation of the variable $h$ for a periodic behavior of system ($HH$) and initial membrane potential of $-30$, $0$, $30$ and $60$mV.}
\label{hLinearPeriodicFigure}
\end{figure}

\begin{figure}[!ht]
\begin{center}
\includegraphics[width=10cm]{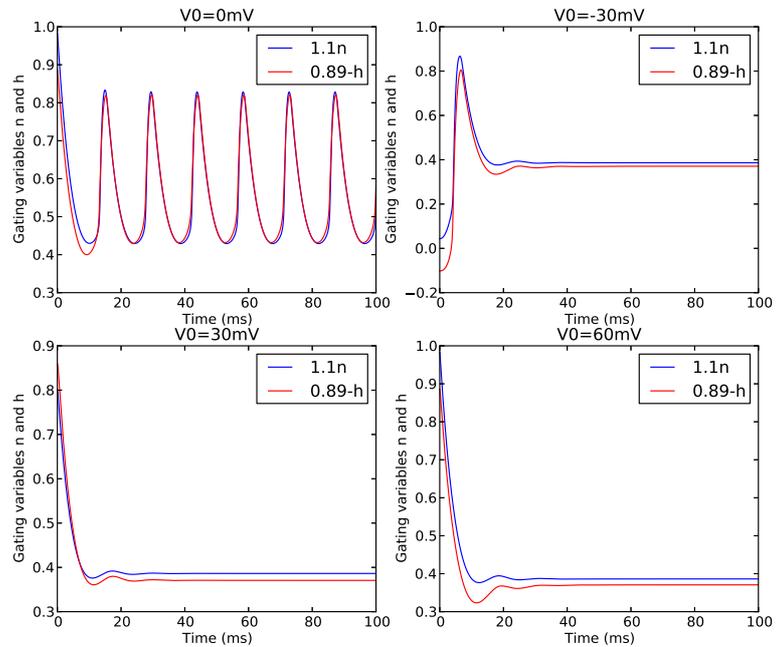}
\end{center}
\caption{Linear approximation of the variable $h$ for a transitory behavior of system ($HH$) and initial membrane potential of $-30$, $0$, $30$ and $60$mV.}
\label{hLinearTransitoryFigure}
\end{figure}

\clearpage

\paragraph{The ChR2-3-states model}\hspace*{0pt}\\

In terms of singular controls, this model behaves similarly to the Morris-Lecar model. There is no singular extremal for the same reasons, and the optimal control is bang-bang with the same expression (the proof is exactly the same). The direct method is implemented with the numerical values of Appendices \ref{AppendixHH} and \ref{AppendixChR2_3States}, the targeted action potential has been fixed to $90$mV. The optimal control is physiological here and has in fact no switching time, the light has to be on all the way to the spike (see Figure \ref{HH2DCHR2_3States}). 

\begin{figure}[!ht]
\begin{center}
\includegraphics[scale=0.6]{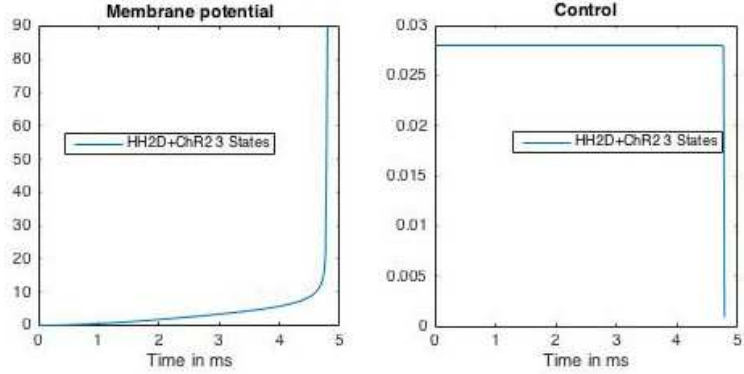}
\end{center}
\caption{Optimal trajectory and bang-bang optimal control for the HH2D-ChR2-3-states model.}
\label{HH2DCHR2_3States}
\end{figure}

\paragraph{The ChR2-4-states model}\hspace*{0pt}\\

The ChR2-4-states model is interesting because it shows that the Hodgkin-Huxley behaves in the opposite way of the Morris-Lecar model. Indeed, the ChR2-4-states model slightly outperforms the ChR2-3-states model, and requires less light, for small values of $u_{max}$, including the physiological value of $u_{max}=0.028$. Furthermore, when $u_{max}$ increases, the 3-states and 4-states models exactly match, both in terms of optimal trajectory and optimal control (Figure \ref{HH2D_ChR2_3and4States}). This means that the ChR2-3-states model is a good approximation of the ChR2-4-states model, in terms of optimal control, for the reduced Hodgkin-Huxley. This is a nice property since the ChR2-3-states is theoretically tractable in terms of singular controls. 

\clearpage

\begin{figure}[!ht]
\centering
\begin{tabular}{c}
   a) \includegraphics[scale=0.54]{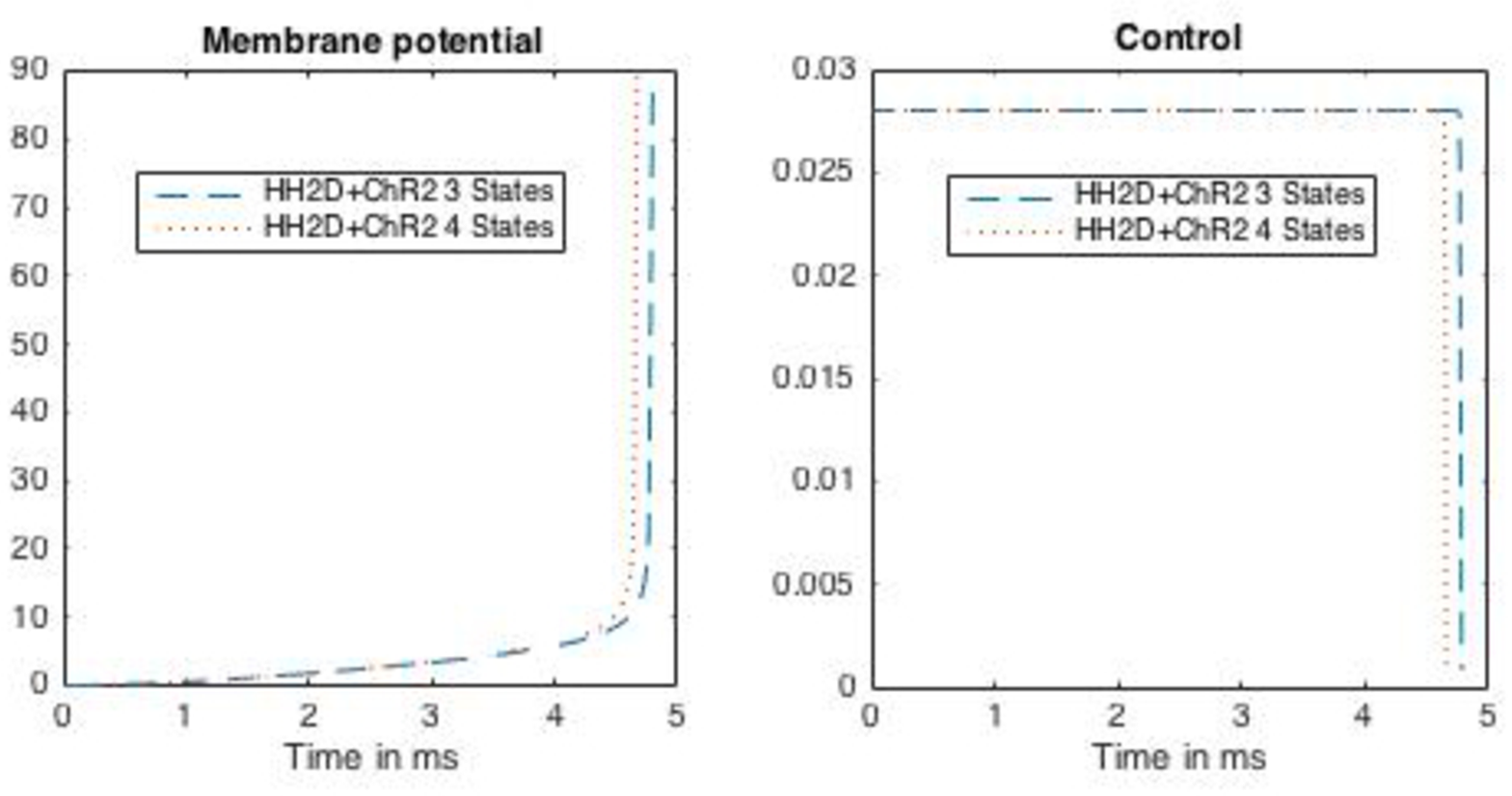} \\
   b) \includegraphics[scale=0.51]{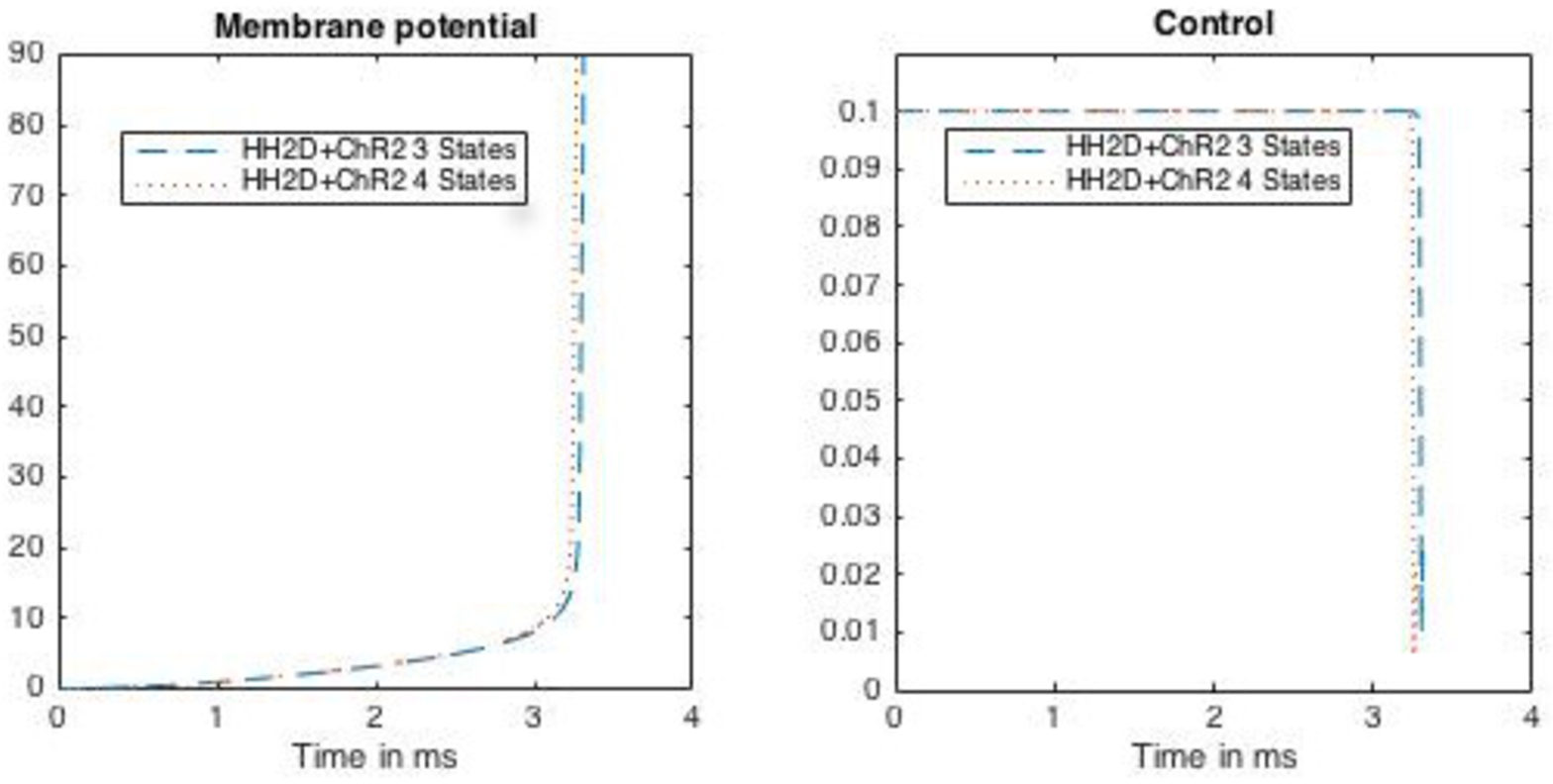} \\
   c) \includegraphics[scale=0.51]{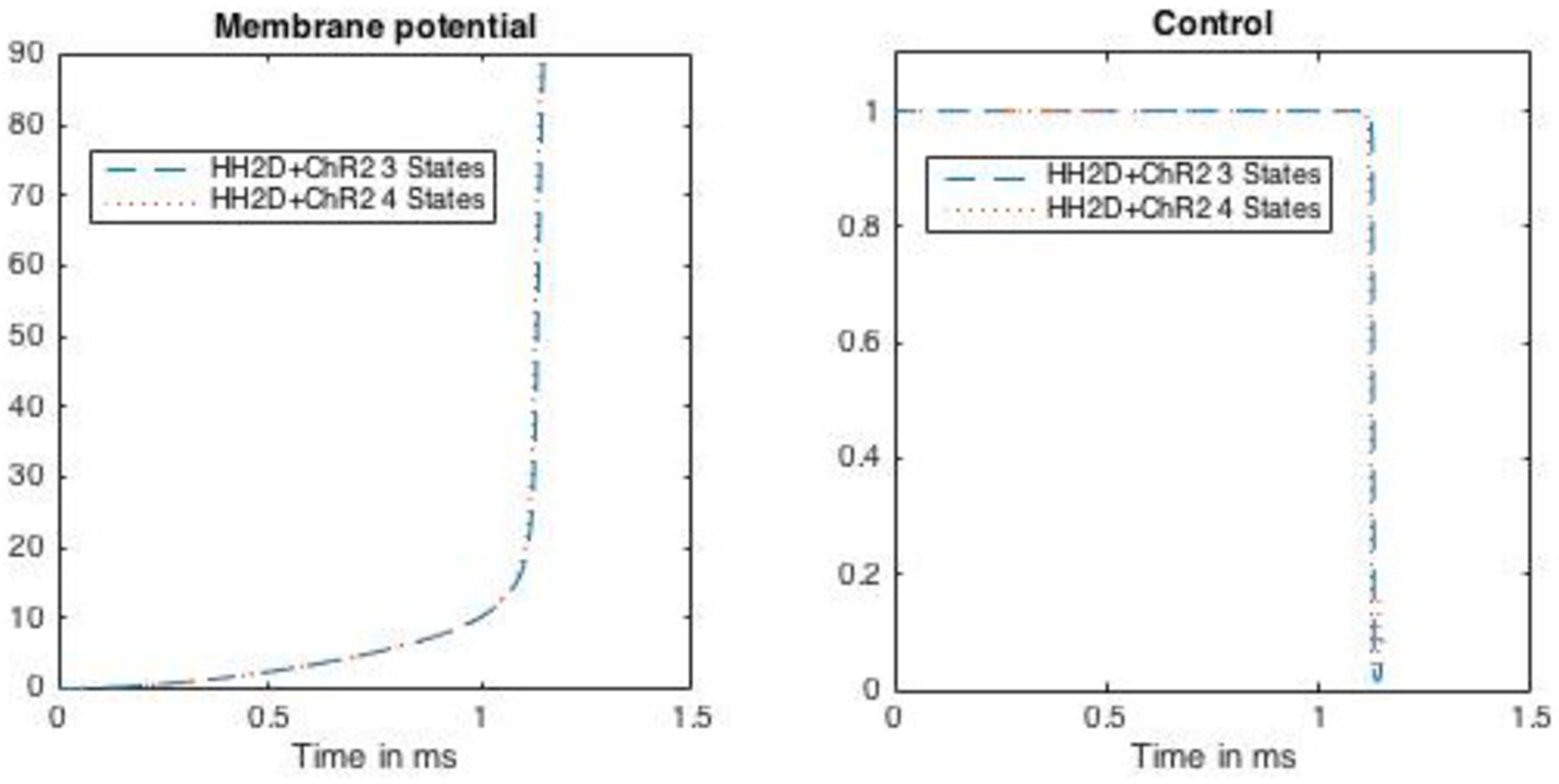} \\
   d) \includegraphics[scale=0.51]{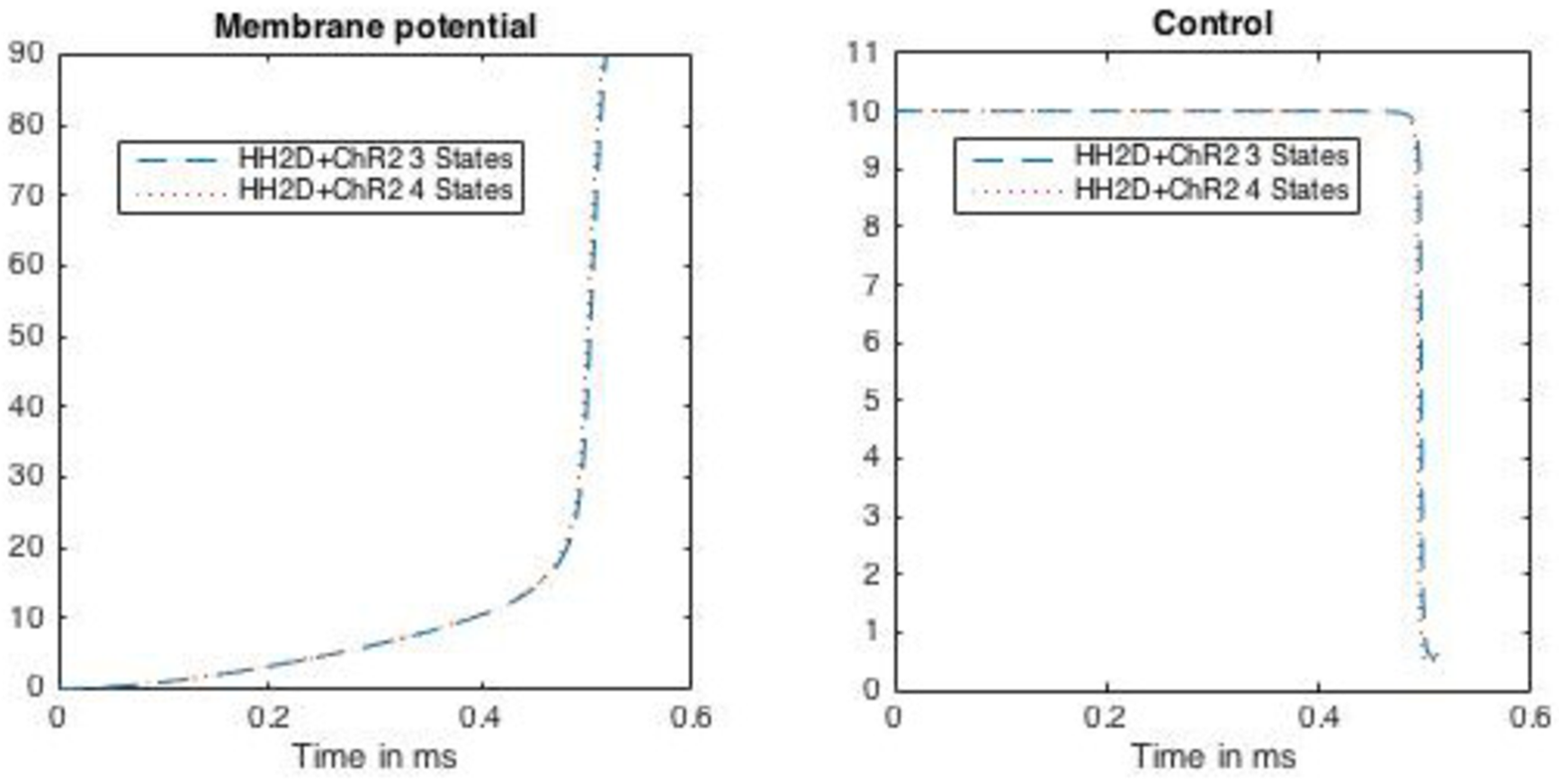}
\end{tabular}
\caption{Optimal trajectory and bang-bang optimal control for the HH2D-ChR2-3-states and HH2D-ChR2-4-states models with $u_{max}=$ a) $0.028$, b) $0.1$, c) $1$, d) $10\,\mathrm{ms}^{-1}$.}
\label{HH2D_ChR2_3and4States}
\end{figure}

%

\subsection{The complete Hodgkin-Huxley model}

\paragraph{The ChR2-3-states model}\hspace*{0pt}\\

The complete Hodgkin-Huxley model is more difficult to analyze mathematically, and optimal singular controls cannot be excluded a priori as for the previous models. Nevertheless, singular controls do not appear in our numerical simulations. Figure \ref{HHCHR2_3States} shows the optimal trajectory and control for numerical values taken in Appendices \ref{AppendixHH} and \ref{AppendixChR2_3States}.

\begin{figure}[!ht]
\begin{center}
\includegraphics[scale=0.6]{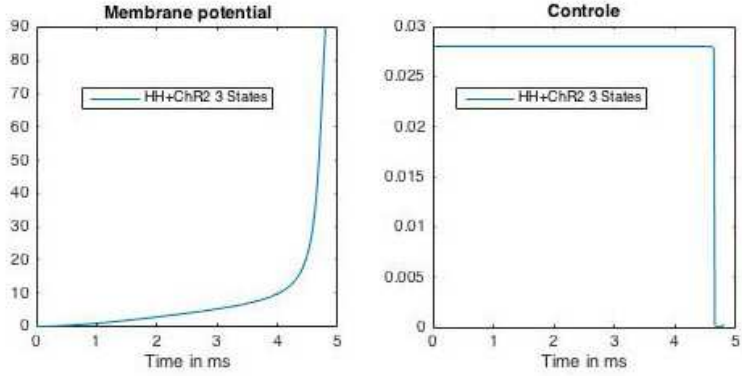}
\end{center}
\caption{Optimal trajectory and bang-bang optimal control for the HH-ChR2-3-states model.}
\label{HHCHR2_3States}
\end{figure}

\paragraph{The ChR2-4-states model}\hspace*{0pt}\\

We observe the same phenomenon than for the reduced Hodgkin-Huxley model, that is, for small values of $u_{max}$, the ChR2-4-states model slightly outperforms the ChR2-3-states model and when $u_{max}$ increases, both models match (Figure \ref{HH_ChR2_3and4States}). This constitutes a new argument in favor of the reduced Hodgkin-Huxley model since it captures the features of the complete model in terms of optimal control. Finally, the fact that both Hodgkin-Huxley models have almost the same behavior for the two ChR2 models means that they can be qualified as robust with regards to the mathematical modeling of ChR2.

\clearpage

\begin{figure}[!ht]
\centering
\begin{tabular}{c}
   a)\includegraphics[scale=0.54]{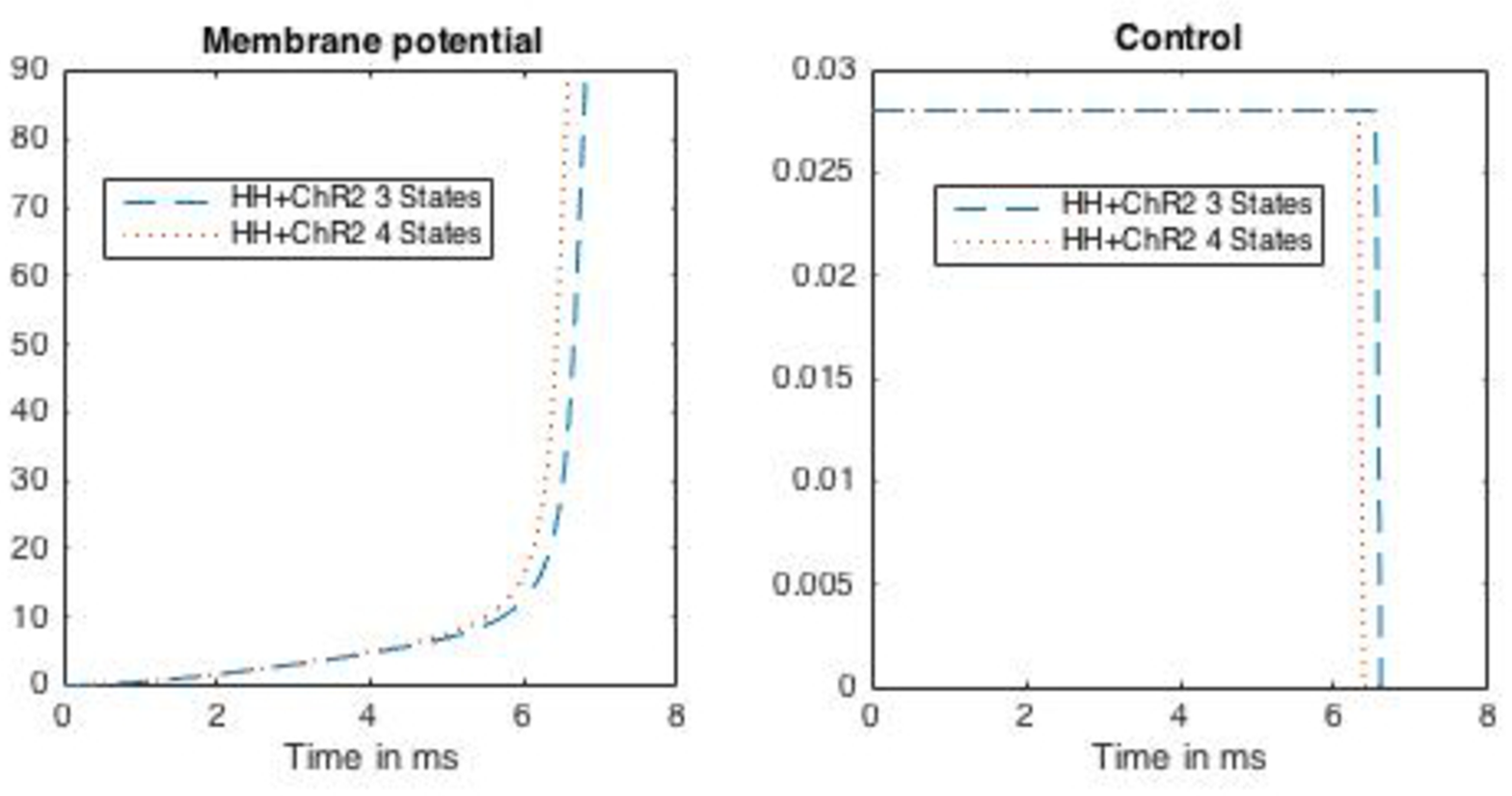} \\
   b)\includegraphics[scale=0.51]{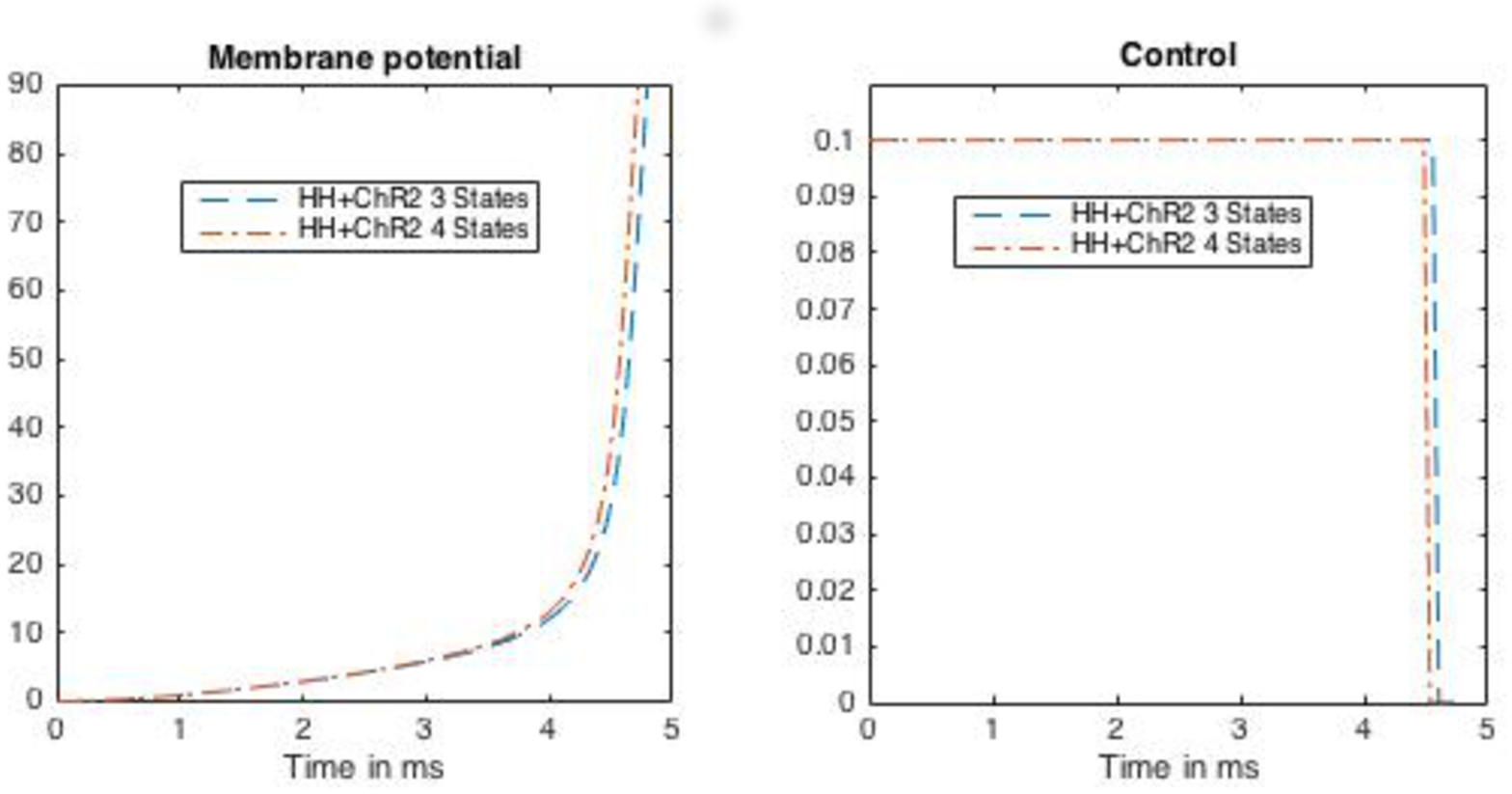} \\
    c)\includegraphics[scale=0.51]{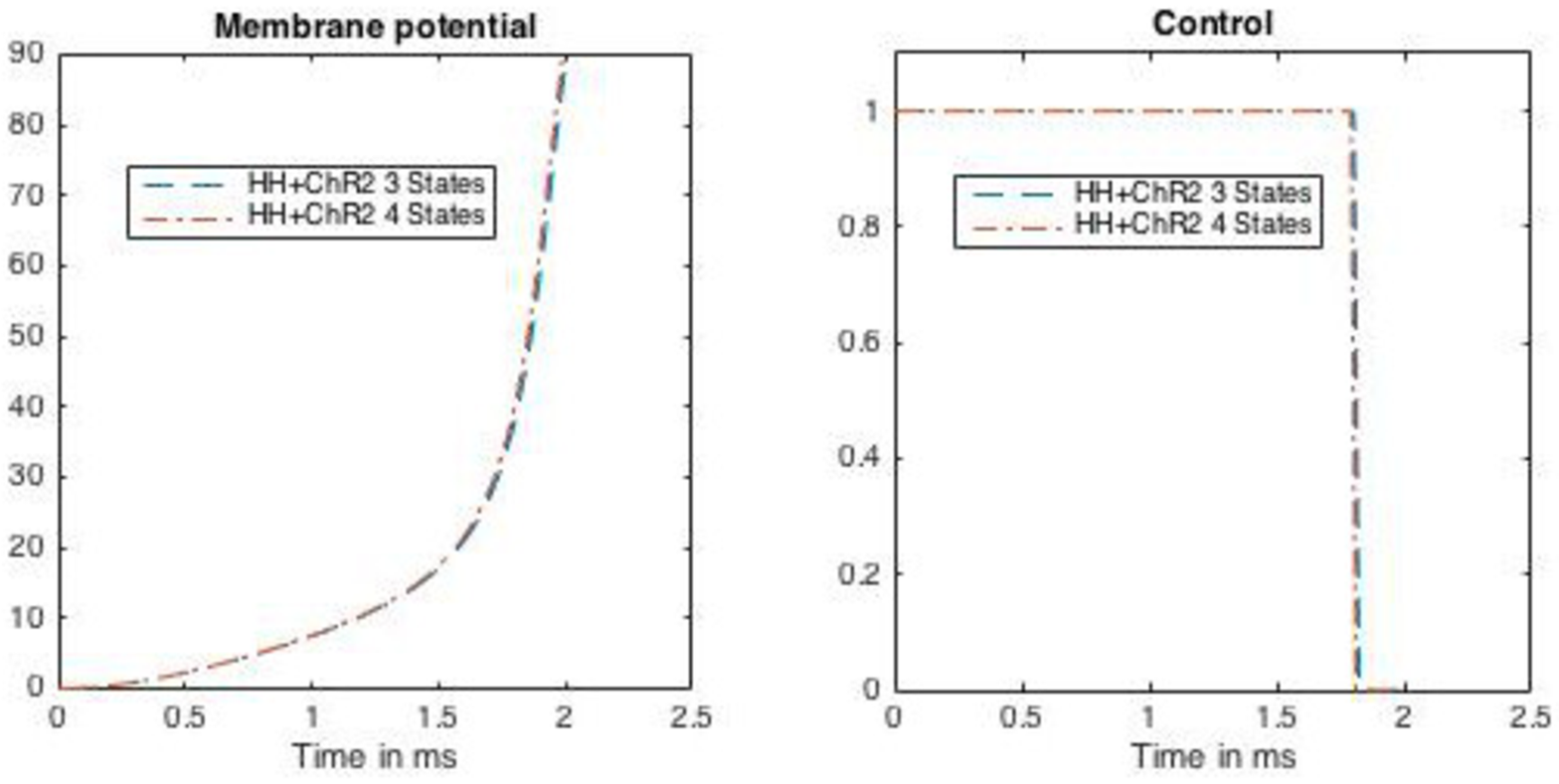} \\
    d)\includegraphics[scale=0.51]{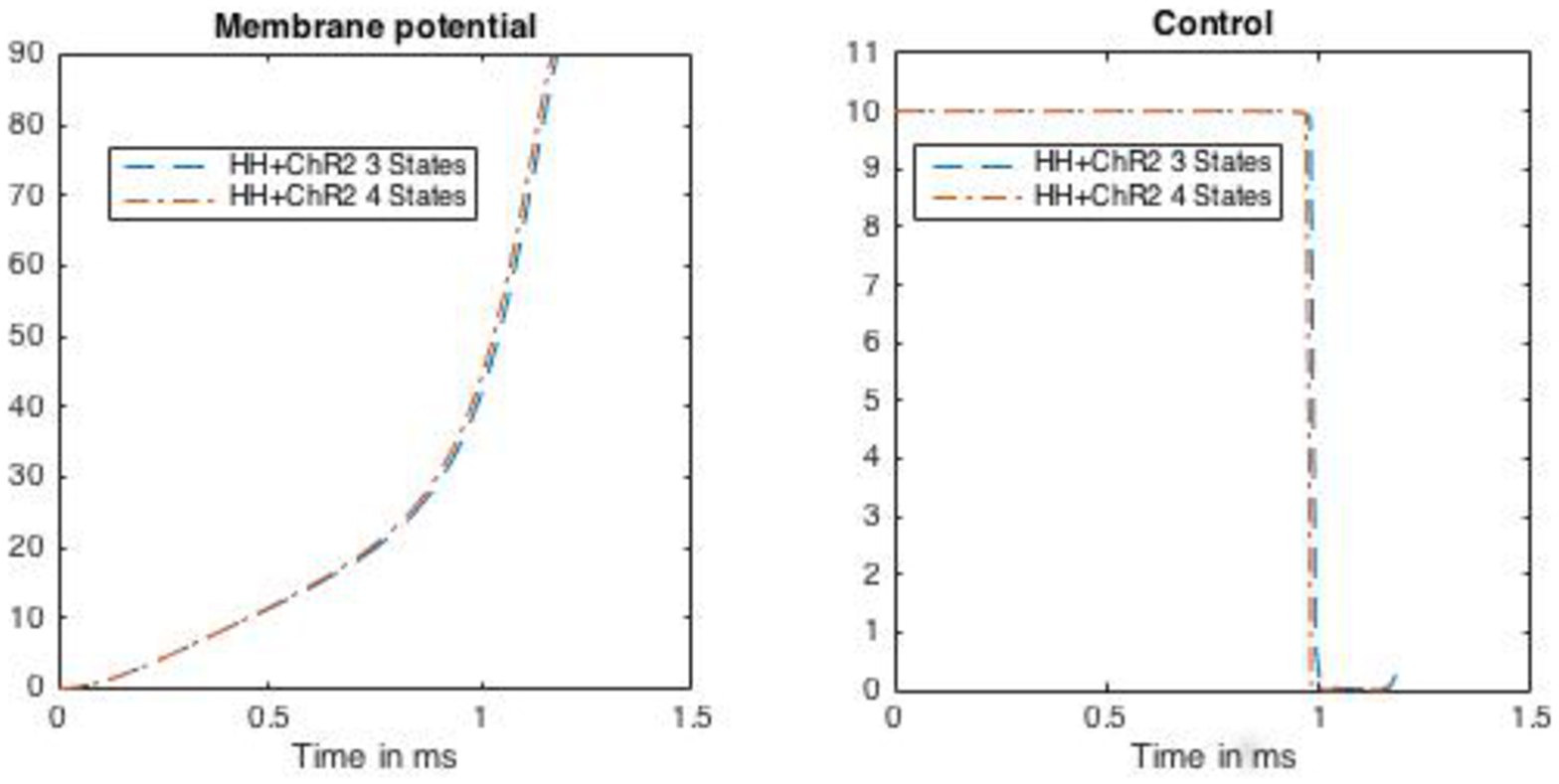}
\end{tabular}
\caption{Optimal trajectory and bang-bang optimal control for the HH-ChR2-3-states and HH-ChR2-4-states models with $u_{max}=$ a) $0.028$, b) $0.1$, c) $1$, d) $10\,\mathrm{ms}^{-1}$.}
\label{HH_ChR2_3and4States}
\end{figure}

%

\subsection{Conclusions on the numerical results}

We begin with comments on the two versions of the ChR2 models for each neuron model. For every neuron model that we numerically treat, the ChR2-3-states and the ChR2-4-states versions behave qualitatively the same. We observe no optimal singular controls and the shapes of optimal controls and optimal trajectories are similar. Nevertheless, we can note some distinctions between the neuron models. For the FitzHugh-Nagumo model, the ChR2-4-states version always outperforms the ChR2-3-states version. This is also the case for the two Hodgkin-Huxley models with the important difference that, when the control maximal value increases, the optimal trajectory and optimal control quantitatively match. The Hodgkin-Huxley models are thus very robust with respect to the ChR2 modeling. The Morris-Lecar model displays an unusual behavior when we compare the ChR2-3-states and the ChR2-4-states versions. Indeed, for low values of the control maximal value, including the physiological value computed in Appendix \ref{AppendixChR2_3States}, the ChR2-3-states version outperforms the ChR2-4-states version and the opposite happens when the control maximal value increases.

As announced at the beginning of Section \ref{SectionApplication}, the numerical results invite to distinguish between two main behavior of neuron models with respect to optogenetic control. Most of the models, that is all the models except the Morris-Lecar, behave as physiologically expected. The optimal control is bang-bang, begins with a maximal arc, and has at most one switch. The Morris-Lecar model has more than one switch. This means that it is more efficient to switch on and off the light several times than just keep the light on almost all the way up to the spike. That is why we qualify this model as nonphysiological. Moreover, by only changing the value of the ChR2 equilibrium potential ($V_{ChR2}$) we can observe a change of the number of switches. Finally, the behavior of the Morris-Lecar model emphasizes the critical importance of optimal control since it allows to find a control that triggers a spike when the expected physiological stimulation (with at most one switch) fails to trigger a spike.

\begin{appendices}

\section{Numerical constants for the Morris-Lecar model}\label{AppendixML}
The numerical values of the several constants and their physiological meaning are taken from \cite{DitlevsenLIF} and gathered in Table \ref{NumericalConstantsML}.
 
\begin{table}[ht]
\caption{Meaning and numerical values of the constants appearing in the Morris-Lecar model}
\begin{center}
\begin{tabular}{lrl}
\hline
$V_1 =$ & $-1.2$ mV & Fitting parameter\\
$V_2 =$ & $18$ mV & Fitting parameter\\
$V_3 =$ & $2$ mV & Fitting parameter\\
$V_4 =$ & $30$ mV & Fitting parameter\\
$g_{Ca}=$ & $4.4$ $\mu\mathrm{ S/cm^2}$ & Maximal conductance of $Ca^{2+}$ channels\\
$g_{K}=$ & $8$ $\mu\mathrm{ S/cm^2}$ &  Maximal conductance of $K^{+}$ channels\\
$g_{L}=$ & $2$ $\mu\mathrm{ S/cm^2}$ & Conductance associated with the leakage current\\
$V_{Ca} =$ & $120$ mV & Equilibrium potential of $Ca^{2+}$ ions\\
$V_{K} =$ & $-84$ mV &  Equilibrium potential of $K^{+}$ ions\\
$V_{L} =$ & $-60$ mV & Equilibrium potential for the leak current\\
$C=$& $20$ $\mu\mathrm{ F/cm^2}$& Membrane capacitance\\
$\phi =$&$0.04$ $\mathrm{ms^{-1}}$& Fitting parameter\\
\hline
\end{tabular}
\end{center}
\label{NumericalConstantsML}
\end{table}

\clearpage

\noindent Table \ref{LongtinChR2} gathers the numerical values for Figure \ref{MLCHR2_3StatesNoSpike}.

\begin{table}[ht]
\caption{Meaning and numerical values of the constants, taking from \cite{Longtin}, appearing in the Morris-Lecar model}
\begin{center}
\begin{tabular}{lrl}
\hline
$V_1 =$ & $-0.01$ mV & Fitting parameter\\
$V_2 =$ & $0.15$ mV & Fitting parameter\\
$V_3 =$ & $0.1$ mV & Fitting parameter\\
$V_4 =$ & $0.145$ mV & Fitting parameter\\
$g_{Ca}=$ & $1.0$ $\mu\mathrm{ S/cm^2}$ & Maximal conductance of $Ca^{2+}$ channels\\
$g_{K}=$ & $2.0$ $\mu\mathrm{ S/cm^2}$ &  Maximal conductance of $K^{+}$ channels\\
$g_{L}=$ & $0.5$ $\mu\mathrm{ S/cm^2}$ & Conductance associated with the leakage current\\
$V_{Ca} =$ & $1.0$ mV & Equilibrium potential of $Ca^{2+}$ ions\\
$V_{K} =$ & $-0.7$ mV &  Equilibrium potential of $K^{+}$ ions\\
$V_{L} =$ & $-0.5$ mV & Equilibrium potential for the leak current\\
$C=$& $1.0$ $\mu\mathrm{ F/cm^2}$& Membrane capacitance\\
$\phi =$&$0.333$ $\mathrm{ms^{-1}}$& Fitting parameter\\
\hline
\end{tabular}
\end{center}
\label{LongtinChR2}
\end{table}

\section{Numerical constants for the Hodgkin-Huxley model}\label{AppendixHH}
\begin{align*}
\alpha_n(V) &= \frac{0.1-0.01V}{e^{1-0.1V}-1},&\beta_n(V) &= 0.125e^{-\frac{V}{80}},\\
\alpha_m(V) &= \frac{2.5-0.1V}{e^{2.5-0.1V}-1}, &\beta_m(V) &= 4e^{-\frac{V}{18}},\\
\alpha_h(V) &= 0.07e^{-\frac{V}{20}},&\beta_h(V) &= \frac{1}{e^{3-0.1V}+1}.
\end{align*}

The following table gathers the numerical values of the Hodgkin-Huxley model, as given in the original paper \cite{HH}.
\begin{table}[!ht]
\caption{Meaning and numerical values of the constants appearing in the Hodgkin-Huxley model}
\begin{center}
\begin{tabular}{lrl}
\hline
$\bar{g}_{K}=$ & $36$ $\mu\mathrm{ S/cm^2}$ &  Maximal conductance of $K^{+}$ channels\\
$\bar{g}_{Na}=$ & $120$ $\mu\mathrm{ S/cm^2}$ & Maximal conductance of $Na^{2+}$ channels\\
$g_{L}=$ & $0.3$ $\mu\mathrm{ S/cm^2}$ & Conductance associated with the leakage current\\
$E_{Na} =$ & $115$ mV & Equilibrium potential of $Na^{2+}$ ions\\
$E_{K} =$ & $-12$ mV &  Equilibrium potential of $K^{+}$ ions\\
$E_{L} =$ & $-10.6$ mV & Equilibrium potential for the leak current\\
$C=$& $0.9$ $\mu\mathrm{ F/cm^2}$& Membrane capacitance\\
\hline
\end{tabular}
\end{center}
\label{NumericalConstantsHH}
\end{table}

The equilibrium potential $E_L$ of the leakage current is usually set so that the equilibrium value of the (HH) system is such that $V=0$.

\section{Numerical constants for the ChR2 models}\label{NumericalConstantsChR2}
\subsection{The 3-states model}\label{AppendixChR2_3States}
The constants of the model are the rates $K_d$ and $K_r$ of the transitions between the open state and the light adapted closed state and between the two closed states, the maximal conductance $g_{ChR2}$ and the equilibrium potential $V_{ChR2}$. As specified in Section \ref{resultsSection}, we assume that these rates are constants during the evolution in order to obtain an affine control system. For the numerical computations, we took the values given in Table 1 of \cite{Nikolic}:

\[K_d = 0.2\text{ ms}^{-1}, \qquad K_r = 0.021 \text{ ms}^{-1}.\]

The maximal conductance is given by the formula $g_{ChR2} = \rho_{ChR2}g^*_{ChR2}$, with $\rho_{ChR2}$ the density of channels and $g^*_{ChR2}$ the conductance of a single channel. These values are taken from \cite{Foutz} to obtain

\[
g_{ChR2} = 0.65 \text{ mS}\cdot \text{cm}^{-2}.
\]

As mentioned right after in Appendix \ref{AppendixHH}, the physiological equilibrium membrane potential is mathematically shifted to equal 0. The equilibrium potential of the $ChR2$ that is usually measured around 0 (\cite{Foutz}) and very often taken as 0 (\cite{Foutz},\cite{Nikolic}). The exact value 0 would raise a mathematical problem because since we shifted the value of $E_L$ so that $V=0$ corresponds to the equilibrium point of the uncontrolled system we start from. Indeed, $V=0$ would also correspond to an equilibrium point of the controlled system, regardless of the value of the control. For this reason, we shifted the value of $V_{ChR2}$ and took it equal to $60$mV. This value corresponds to the shift of the membrane resting potential for the Morris-Lecar and Hodgkin-Huxley models.

Finally we can give an estimation of the physiological maximal value $u_{max}$ of the control. Indeed, upon illumination, the transition rate between the dark adapted closed state and the open state in \cite{Nikolic} is $\varepsilon F$ where $\varepsilon =0.5$ is the quantum efficiency and $F$ is given by the formula

\[
 F = \frac{\sigma_{ret} \phi }{w_{loss}},
\]  

where $\sigma_{ret} \simeq 10^{-8} \mu\text{m}^2$ is the retinal cross section (cross section of the photon receptor on the $ChR2$), $\phi = 6.2\times10^{9}\text{ ph}\cdot\mu\text{m}^{-2}\cdot\text{s}^{-1}$ is the original flux of photons and $w_{loss} =1.1$ is a loss factor. As for the numerical value of $K_d$ and $K_r$ we took the one of Table 1 in  \cite{Nikolic} for the value of $\phi$. With these values we get 

\[
u_{max} = 0.028 \text{ ms}^{-1}.
\]

\subsection{The 4-states model}\label{AppendixChR2_4States}

The numerical values for the ChR2-4-states model are taken from \cite{Foutz} and gathered in Table \ref{TableChR2_4States} below

\begin{table}[ht]
\caption{Numerical values of the constants appearing in the ChR2-4-States model}
\begin{center}
\begin{tabular}{lcl}
\hline
$K_{d1} =$ & $0.13$ $\mathrm{ms}^{-1}$ & Decay rate\\
$K_{d2} =$ & $0.025$ $\mathrm{ms}^{-1}$ & Decay rate\\
$e_{12} =$ & $0.053$ $\mathrm{ms}^{-1}$  & Transition rate\\
$e_{21} =$ & $0.023$ $\mathrm{ms}^{-1}$ & Transition rate\\
$K_r=$ & $0.004$ $\mathrm{ms}^{-1}$ & Recovery rate\\
$\varepsilon1=$ & $0.5$ & Quantum efficency for $o_1$\\
$\varepsilon2=$ & $0.1$ & Quantum efficency for $o_2$\\
$g_1 =$ & $50$ $\mathrm{fS}$  & $o_1$ state conductance\\
$\rho=$ & $0.05$ & Relative conductance of the open states\\
$\rho^*_{ChR2} =$ & $130$ $\mu\mathrm{m}^{-2}$ &  ChR2 density\\ 
$g_{ChR2} =$ & $0.65$ $\mathrm{mS}\cdot \mathrm{cm}^{-2}$ & $ChR2$ maximal conductance\\
\hline
\end{tabular}
\end{center}
\label{TableChR2_4States}
\end{table}

\end{appendices}

\nocite{*}
\bibliographystyle{plain}
\bibliography{Minimal_Time}

\begin{thebibliography}{10}

\bibitem{Narcolepsy}
A.R. Adamantidis, F.~Zhang, A.M. Aravanis, K.~Deisseroth, and L.~de~Lecea.
\newblock Neural substrates of awakening probed with optogentic control of
  hypocretin neurons.
\newblock {\em Nature}, 450:420--424, 2007.

\bibitem{BonnardKupka}
B.~Bonnard and I.~Kupka.
\newblock Théories des singularités de l'application entrée/sortie et
  optimalité des trajectoires singulières dans le problème du temps minimal.
\newblock {\em Forum Math.}, 5:111--159, 1993.

\bibitem{OptoFuture}
E.S. Boyden.
\newblock Optogenetics and the future of neuroscience.
\newblock {\em Nature Neuroscience}, 18:1200--1201, 2015.

\bibitem{millisecondTimescale}
E.S. Boyden, F.~Zhang, E.~Bamberg, G.~Nagel, and K.~Deisseroth.
\newblock Millisecond-timescale, genetically targeted optical contro of neural
  activity.
\newblock {\em Nature Neuroscience}, 8(9):1263--1268, September 2005.

\bibitem{PhaseReduction}
E.~Brown, J.~Moehlis, and P.~Holmes.
\newblock On the phase reduction and response dynamics of neural oscillator
  populations.
\newblock {\em Neural Comput.}, 16(4):673--715, 2004.

\bibitem{Parkinson}
Y.~Chen, M.~Xiong, and S.C. Zhang.
\newblock Illuminating {P}arkinson's therapy with {O}ptogenetics.
\newblock {\em Nat. Biotechnol.}, 33(2):149--150, 2015.

\bibitem{methodOfTheYear}
K.~Deisseroth.
\newblock Optogenetics.
\newblock {\em Nature Methods}, 8:26--29, 2011.

\bibitem{Opto10Years}
K.~Deisseroth.
\newblock Optogenetics: 10 years of microbial opsins in neuroscience.
\newblock {\em Nature Neuroscience}, 18:1213--1225, 2015.

\bibitem{DitlevsenLIF}
S.~Ditlevsen and P.~Greenwood.
\newblock The morris-lecar neuron model embeds a leaky integrate-and-fire
  model.
\newblock {\em J. Math. Biol.}, 67(2):239--259, August 2013.

\bibitem{Feng}
J.~Feng and H.C. Tuckwell.
\newblock Optimal control of neuronal activity.
\newblock {\em Phys. Rev. Lett.}, 91(1), 2003.

\bibitem{FitzHugh}
R.~FitzHugh.
\newblock Impulses and physiological states in theoretical models of nerve
  membrane.
\newblock {\em Biophysical J.}, 1(6):445--466, 1961.

\bibitem{AmplBook}
R.~Fourer, D.M. Gay, and B.W. Kernighan.
\newblock {\em {AMPL}: {A} {M}odeling {L}anguage for {M}athematical
  {P}rogramming}.
\newblock Duxbury Press, 2002.

\bibitem{Foutz}
T.J. Foutz, R.L. Arlow, and C.C. McIntyre.
\newblock Theoretical principles underlying optical stimulation of a
  channelrhodopsin-2 positive pyramidal neuron.
\newblock {\em J. Neurophysiol.}, 107(12):3235--3245, 2012.

\bibitem{Blindness}
B.M. Gaub, A.E M.H.~Berry, Holt, E.Y. Isacoff, and J.G. Flannery.
\newblock Optogenetics vision restoration using rhodopsin for enhanced
  sensitivity.
\newblock {\em Molecular Therapy}, 23(10):1562--1571, 2015.

\bibitem{Hegemann}
P.~Hegemann, S.~Ehlenbeck, and D.~Gradmann.
\newblock Multiple photocycles of channelrhodopsin.
\newblock {\em Biophys. J.}, 89:3911--3918, 2005.

\bibitem{HH}
A.L. Hodgkin and A.F. Huxley.
\newblock A quantitative description of membrane current and its application to
  conduction and excitation in nerve.
\newblock {\em J. Physiol.}, 117:500--544, 1952.

\bibitem{MorrisLecar}
H.~Lecar and C.~Morris.
\newblock Voltage oscillations in the barnacle giant muscle fiber.
\newblock {\em Biophysical J.}, 35:193--213, 1981.

\bibitem{Depression}
M.K. Lobo, E.J. Nestler, and H.E. Covington.
\newblock Potential utility of {O}ptogenetics in the study of depression.
\newblock {\em Biol. Psychiatry}, 71(12):1068--1074, 2012.

\bibitem{Ditlevsen}
A.~Lolov, S.~Ditlevsen, and A.~Longtin.
\newblock Stochastic optimal control of single neuron spike trains.
\newblock {\em J. Neural. Eng.}, 11, 2014.

\bibitem{Nabi}
A.~Nabi and J.~Moehlis.
\newblock Single input optimal control for globally coupled neuron networks.
\newblock {\em J. Neural Eng.}, 8:3911--3918, 2011.

\bibitem{Nagumo}
J.~Nagumo, S.~Arimoto, and S.~Yoshizawa.
\newblock An active pulse transmission line simulating nerve axon.
\newblock {\em Proc IRE}, 50(10):2061--2070, 1962.

\bibitem{Nikolic2006}
K.~Nikolic, P.~Degenaar, and C.~Toumazou.
\newblock Modeling and engineering aspects of channelrhodopsin2 system for
  neural photostimulation.
\newblock {\em Proc. 28th IEEE Eng. Med. Biol. Soc. Conf.}, pages 1626--1629,
  2006.

\bibitem{Nikolic}
K.~Nikolic, N.~Grossman, M.S. Grubb, J.~Burrone, C.~Toumazou, and P.~Degenaar.
\newblock Photocycles of channelrhodopsin-2.
\newblock {\em Photochemistry and Photobiology}, 85:400--411, 2009.

\bibitem{Epilepsy}
J.T. Paz and J.R. Huguenard.
\newblock Optogenetics and epilepsy: past, present and future.
\newblock {\em Epilepsy Curr.}, 15(1):34--38, 2015.

\bibitem{PontryaginBook}
L.~Pontryagin, V.~Boltyanski, R.~Gamkrelidze, and E.~Michtchenko.
\newblock {\em Théorie mathématique des processus optimaux}.
\newblock Editions Mir, Moscou, 1974.

\bibitem{Rinzel}
J.~Rinzel.
\newblock Excitation dynamics: insights from simplified membrane models.
\newblock {\em Federation Proc.}, 44:2944--2946, 1985.

\bibitem{RubinWechselberger}
J.~Rubin and M.~Wechselberger.
\newblock The selection of mixed-mode oscillations in a {H}odgkin-{H}uxley
  model with multiple timescales.
\newblock {\em Chaos}, 18(1):015105, 2008.

\bibitem{Alzheimer}
T.J. Ryan, D.S. Roy, M.~Pignatelli, A.~Arons, and S.~Tonegawa.
\newblock Engram cells retain memory under retrograde amnesia.
\newblock {\em Science}, 348(6238):1007--1013, 2015.

\bibitem{Longtin}
M.~Saint-Hilaire and A.~Longtin.
\newblock Comparison of coding capabilities of type {I} and type {II} neurons.
\newblock {\em Journal of computational neurosciences}, 16:299--313, 2004.

\bibitem{TrelatBook}
E.~Trélat.
\newblock {\em Contrôle optimal : théorie et applications}.
\newblock Vuibert, 2008.

\bibitem{TrelatAerospace}
E.~Trélat.
\newblock Optimal control and applications to aerospace: some results and
  challenges.
\newblock {\em J. Optim. Theory Appl.}, 154(3):713--758, 2012.

\bibitem{Abilez}
J.~Wong, O.J. Abilez, and E.~Kuhl.
\newblock Computational optogenetics: a novel continuum framework for the
  photoelectrochemistry of living systems.
\newblock {\em J. Mech. Phys. Solids.}, 60(6):1158--1178, 2012.

\bibitem{Ipopt}
A.~Wächter and L.T. Biegler.
\newblock On the implementation of an interior-point filer line-search
  algorithm for large-scale nonlinear programming.
\newblock {\em Math. Program.}, 106:25--27, 2006.

\end{thebibliography}

\end{document}